\newtheorem{theorem}{Theorem}
\newtheorem{lemma}[theorem]{Lemma}
\newtheorem{corollary}[theorem]{Corollary}
\theoremstyle{definition}
\newtheorem{definition}[theorem]{Definition}
\newtheorem{example}[theorem]{Example}
\theoremstyle{remark}
\DeclareMathOperator{\rank}{rank}
\DeclareMathOperator{\rk}{rk}
\begin{document}
\title[Resilience of ranks of higher inclusion matrices]{Resilience of ranks of higher inclusion matrices}

\author[Plaza and Xiang]{Rafael Plaza, Qing Xiang$^{\dagger}$}

\thanks{$^{\dagger}$Research partially supported by an NSF grant DMS-1600850}

\address{Rafael Plaza, Department of Mathematical Sciences, University of Delaware, Newark, DE 19716, USA}
\email{plaza@udel.edu}

\address{Qing Xiang, Department of Mathematical Sciences, University of Delaware, Newark, DE 19716, USA} \email{qxiang@udel.edu}

\keywords{Higher inclusion matrix, Rank, Representation of $GL(n,q)$, Specht module}

\begin{abstract}
Let $n \geq r \geq s \geq 0$ be integers and $\mathcal{F}$ a family of $r$-subsets of $[n]$. Let $W_{r,s}^{\mathcal{F}}$ be the higher inclusion matrix of the subsets in ${\mathcal F}$ vs. the $s$-subsets of $[n]$. When $\mathcal{F}$ consists of all $r$-subsets of $[n]$, we shall simply write $W_{r,s}$ in place of $W_{r,s}^{\mathcal{F}}$. In this paper we prove that the rank of the higher inclusion matrix $W_{r,s}$ over an arbitrary field $K$ is resilient. That is,  if the size of $\mathcal{F}$ is ``close''  to ${n \choose r}$ then $\rank_{K}( W_{r,s}^{\mathcal{F}}) = \rank_{K}(W_{r,s})$, where $K$ is an arbitrary field. Furthermore, we prove that the rank (over a field $K$) of the higher inclusion matrix of $r$-subspaces vs. $s$-subspaces of an $n$-dimensional vector space over $\mathbb{F}_q$ is also resilient if ${\rm char}(K)$ is coprime to $q$.

\end{abstract}

\maketitle

\section{Introduction}

Let $n\geq r \geq s\geq 0$ be integers, and let $[n]=\{1,2,\ldots ,n\}$. Given a family $\mathcal{F}$ of $r$-subsets of $[n]$, we define the higher inclusion matrix $W_{r,s}^{\mathcal{F}}$ to be the $(0,1)$-matrix with rows indexed by the $r$-subsets $R$ in $\mathcal{F}$, columns indexed by the $s$-subsets $S$ of $[n]$,  and with $(R,S)$-entry equal to one if and only if $S\subseteq R$.  When $\mathcal{F}= {[n] \choose r}$, that is, $\mathcal{F}$ consists of all $r$-subsets of $[n]$, we shall omit the superscript and simply write $W_{r,s}$ in place of $W_{r,s}^{\mathcal{F}}$.

The higher inclusion matrices $W_{r,s}^{\mathcal{F}}$ have played an important role in the theory of $t$-designs (\cite{gj}, \cite{n2}) and in extremal combinatorics (\cite{babaif}, \cite{godsil}). For applications to integral $t$-designs, Wilson \cite{n2} found a diagonal form of $W_{r,s}$. As a consequence, he obtained the rank of $W_{r,s}$ over any field $K$. Specifically, if $n\geq r+s$, and $K$ is any field, then
\[
\rank_{K}(W_{r,s}) = \sum_{j \in Y }\left( {n \choose j} - {n \choose j-1}\right),
\]
where $Y=\{ j :   0 \leq j \leq s,  {r- j \choose s-j }  \neq_K 0   \}$. In the above rank formula, ${n \choose -1}$ should be interpreted as zero. We remark that the above result on the rank of $W_{r,s}$ includes the result of Gottlieb \cite{n4} and the result of Linial and Rothchild \cite{n5} as special cases.

Higher inclusion matrices have also proven very useful in applications of linear algebraic methods in extremal combinatorics (see \cite{babaif}). For example, the following classical result in extremal combinatorics, known as the Lov\'{a}sz version of the Kruskal-Katona theorem \cite{Katona, Kruskal}, can be proved using properties of higher inclusion matrices. Let $\mathcal{F}$ be a family of $r$-subsets of $[n]$. The {\it $s$-shadow of $\mathcal{F}$}, denoted by $\partial^r_s \mathcal{F}$, consists of all $s$-subsets of $[n]$ that are contained in some element of $\mathcal{F}$. 

\begin{theorem}\label{int_teo1}
{\em (Lov\'{a}sz  \cite{Lo2})} Let $\mathcal{F}$ be a family of $r$-subsets of $[n]$ such that $|\mathcal{F}| = {x \choose r}$, where $x$ is a real number greater than or equal to $r$. If $s < r$ then $|\partial^r_s \mathcal{F}| \geq {x \choose s}$, and equality holds if and only if $x$ is an integer and there exists a subset $X$ of $[n]$ of size $x$ such that $\mathcal{F}={X \choose r}$. 
\end{theorem}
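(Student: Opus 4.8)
The plan is to reduce the assertion to the single case $s=r-1$ and to prove that case by a double induction, on $r$ and on $n$. The ``if'' direction of the equality clause is immediate, since for $|X|=x\ge r$ every $s$-subset of $X$ extends to an $r$-subset, so $\partial^r_s\binom Xr=\binom Xs$. For the reduction, note that shadows compose, $\partial^r_s\mathcal F=\partial^t_s(\partial^r_t\mathcal F)$ for $s\le t\le r$; granting the implication ``$|\mathcal F|=\binom xr\Rightarrow|\partial^r_{r-1}\mathcal F|\ge\binom x{r-1}$'' together with its equality clause for every $r$, one iterates it $r-s$ times, writing the size of the $k$-th shadow as $\binom{x_k}{k}$ with $x_k\ge k$ — legitimate because $t\mapsto\binom tk$ is a continuous increasing bijection of $[k,\infty)$ onto $[1,\infty)$ — so that $x=x_r\le x_{r-1}\le\dots\le x_s$ and hence $|\partial^r_s\mathcal F|\ge\binom xs$. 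If equality holds here then every step is tight, so $x$ is an integer, the $s=r-1$ equality clause gives $\partial^r_{r-1}\mathcal F=\binom X{r-1}$ with $|X|=x$, and since each member of $\mathcal F$ is the union of its $(r-1)$-subsets (all lying in $\binom X{r-1}$) it is contained in $X$; comparing cardinalities gives $\mathcal F=\binom Xr$.

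For the case $s=r-1$, fix any $v\in[n]$ contained in some member of $\mathcal F$ (otherwise $\mathcal F=\emptyset$ and there is nothing to prove), and set $\mathcal F_{\bar v}=\{F\in\mathcal F:v\notin F\}$ and $\mathcal F_v=\{F\setminus\{v\}:v\in F\in\mathcal F\}$ on the point set $[n]\setminus\{v\}$, with $|\mathcal F_{\bar v}|=\binom pr$ and $|\mathcal F_v|=\binom q{r-1}$, so $\binom pr+\binom q{r-1}=\binom xr$. Examining the inclusion relation recorded by $W^{\mathcal F}_{r,r-1}$ column by column shows that $\mathcal G:=\partial^r_{r-1}\mathcal F$ splits cleanly: its members containing $v$ form, after deleting $v$, exactly $\partial^{r-1}_{r-2}\mathcal F_v$, and its members avoiding $v$ form exactly $\partial^r_{r-1}\mathcal F_{\bar v}\cup\mathcal F_v$. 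Hence
\[
|\mathcal G|=|\partial^{r-1}_{r-2}\mathcal F_v|+\bigl|\partial^r_{r-1}\mathcal F_{\bar v}\cup\mathcal F_v\bigr|\ \ge\ |\partial^{r-1}_{r-2}\mathcal F_v|+\max\bigl(|\mathcal F_v|,\,|\partial^r_{r-1}\mathcal F_{\bar v}|\bigr).
\]
Applying the induction on $r$ to the $(r-1)$-uniform family $\mathcal F_v$ gives $|\partial^{r-1}_{r-2}\mathcal F_v|\ge\binom q{r-2}$, and the induction on $n$ applied to $\mathcal F_{\bar v}$ gives $|\partial^r_{r-1}\mathcal F_{\bar v}|\ge\binom p{r-1}$; consequently $|\mathcal G|\ge\binom q{r-1}+\binom q{r-2}=\binom{q+1}{r-1}$ and $|\mathcal G|\ge\binom p{r-1}+\binom q{r-2}$.

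If $q\ge x-1$ the first bound already gives $|\mathcal G|\ge\binom{q+1}{r-1}\ge\binom x{r-1}$. If $q\le x-1$, then $\binom pr=\binom xr-\binom q{r-1}\ge\binom xr-\binom{x-1}{r-1}=\binom{x-1}r$, so $p\ge x-1$, and the conclusion reduces to the elementary inequality: if $\binom pr+\binom q{r-1}=\binom xr$ and $q\le x-1$, then $\binom p{r-1}+\binom q{r-2}\ge\binom x{r-1}$, a convexity statement about the polynomials $\binom tr$ and $\binom t{r-1}$ with equality only at $p=q=x-1$. I expect this inequality, together with threading the equality analysis through the double induction, to be the main obstacle: one must show that $|\mathcal G|=\binom x{r-1}$ forces $q=x-1$, forces both inductive hypotheses to have been attained with equality — so $\mathcal F_v=\binom{X'}{r-1}$ and $\mathcal F_{\bar v}=\binom{X''}{r}$ with $|X'|=|X''|=x-1$ — and finally forces $X'=X''$, whence $\mathcal F=\binom{X'\cup\{v\}}{r}$ with $|X'\cup\{v\}|=x$, which closes both the bound and its characterization of equality.
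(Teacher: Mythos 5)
The paper itself offers no proof of this theorem: it is quoted from Lov\'asz and then derived (for large $x$) from Keevash's rank statement, so there is nothing internal to compare your argument against. Judged on its own terms, your reduction to the case $s=r-1$ via composition of shadows is correct, and so is the decomposition of $\partial^r_{r-1}\mathcal{F}$ into the link part $\partial^{r-1}_{r-2}\mathcal{F}_v$ (shifted by $v$) and the deletion part $\partial^r_{r-1}\mathcal{F}_{\bar v}\cup\mathcal{F}_v$; this is a standard and viable route.

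The genuine gap is exactly where you flag it: the two-variable inequality ``$\binom{p}{r}+\binom{q}{r-1}=\binom{x}{r}$ and $q\le x-1$ imply $\binom{p}{r-1}+\binom{q}{r-2}\ge\binom{x}{r-1}$'' is asserted, not proved, and it is not a routine convexity fact. If you set $\Phi(q)=\binom{p(q)}{r-1}+\binom{q}{r-2}$ along the constraint curve, then $\Phi$ is \emph{not} monotone on $[r-1,x-1]$: already for $r=3$ one computes $\Phi'(q)>0$ near $q=r-1$ whenever $x$ is moderately large, so the inequality cannot be established by showing $\Phi$ decreases to its value $\binom{x}{r-1}$ at $q=x-1$. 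The statement does appear to be true (the minimum of $\Phi$ is attained at the endpoints, and the left endpoint can be checked directly), but proving it requires a genuine analysis of the generalized binomial functions --- this is the technical heart of the whole theorem and cannot be left as ``elementary.'' The same applies to the equality analysis, which is only a plan: you must show strictness of the inequality for $q<x-1$, and in the tight case $q=x-1$ you must extract $\partial^r_{r-1}\mathcal{F}_{\bar v}\subseteq\mathcal{F}_v=\binom{X'}{r-1}$ from tightness of the union bound before concluding $\mathcal{F}=\binom{X'\cup\{v\}}{r}$. A cleaner way to close the gap is Frankl's device: first apply $(i,j)$-compressions (which do not increase the shadow, with a separate but manageable equality analysis), after which $\partial^r_{r-1}\mathcal{F}_{\bar 1}\subseteq\mathcal{F}_1$ holds automatically; this forces $|\mathcal{F}_1|\ge\binom{x-1}{r-1}$ and eliminates the two-variable inequality entirely. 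One further small point: your dictionary $|\mathcal{F}_{\bar v}|=\binom{p}{r}$ is undefined when $\mathcal{F}_{\bar v}=\emptyset$ under your stated bijection $[k,\infty)\to[1,\infty)$; you should either extend it to $[k-1,\infty)\to[0,\infty)$ or note that this case lands in the branch $q\ge x-1$.
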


The above theorem can be proved in several different ways. Keevash \cite{n6} showed that Theorem 1 follows immediately from the following result on the rank of higher inclusion matrices.

\begin{theorem}\label{int_teo2}
{\em (Keevash \cite{n6})} For every $r > s > 0$ there is a number $n_{r,s}$ so that if $\mathcal{F}$ is a family of $r$-subsets of $[n]$ with $|\mathcal{F}| = {x \choose r} \geq n_{r,s}$ then $\rank_{\mathbb{Q}}(W_{r,s}^{\mathcal{F}}) \geq {x \choose s}$, and equality holds if and only if $x$ is an integer and there exists a subset $X$ of $[n]$ of size $x$ such that $\mathcal{F}={X \choose r}$.
\end{theorem}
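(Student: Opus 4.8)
The plan is to prove the two assertions --- the bound $\rank_{\mathbb{Q}}(W_{r,s}^{\mathcal F})\ge\binom{x}{s}$ and the description of when it is tight --- together by induction, drawing on Wilson's exact rank formula recalled above (over $\mathbb{Q}$ it gives $\rank_{\mathbb{Q}}(W_{r',s'})=\binom{m}{s'}$ whenever $m\ge r'+s'$, so in particular $\rank_{\mathbb{Q}}(W_{r,s}^{\binom{X}{r}})=\binom{|X|}{s}$ for the simplex on a set $X$ with $|X|\ge r+s$) and on the equality case of Theorem~\ref{int_teo1}. I would first normalize: one may assume $\bigcup_{R\in\mathcal F}R=[n]$, and since $|\mathcal F|=\binom{x}{r}\ge n_{r,s}$ with $n_{r,s}$ at our disposal, one may assume $x$, hence $n$, exceeds any threshold depending only on $r,s$; in particular $x\ge r+s$. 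The induction would also carry the degenerate indices $s=0$ (where $W_{r,0}^{\mathcal F}$ is one nonzero column, $\rank=1=\binom{x}{0}$) and $r=s$ (where $W_{s,s}^{\mathcal F}$ has $\rank=|\mathcal F|$) as base cases.

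The engine is a one-vertex ``peeling''. For $v\in[n]$ set $\mathcal F_v=\{R\setminus v:R\in\mathcal F,\ v\in R\}$ and $\mathcal F_{\bar v}=\{R\in\mathcal F:v\notin R\}$, families of $(r-1)$- and $r$-subsets of $[n]\setminus v$. Splitting the rows of $W_{r,s}^{\mathcal F}$ according to whether $v\in R$ and the columns according to whether $v\in S$ displays
\[
W_{r,s}^{\mathcal F}=\begin{pmatrix} W_{r-1,s-1}^{\mathcal F_v} & W_{r-1,s}^{\mathcal F_v}\\[2pt] 0 & W_{r,s}^{\mathcal F_{\bar v}}\end{pmatrix}.
\]
Using the identity $W_{k,t}W_{t,t-1}=(k-t+1)\,W_{k,t-1}$ (valid for any underlying family) to express the columns of the $(1,1)$ block through those of the $(1,2)$ block, a short computation yields
\[
\rank_{\mathbb{Q}}(W_{r,s}^{\mathcal F})\ \ge\ \rank_{\mathbb{Q}}(W_{r-1,s}^{\mathcal F_v})+\rank_{\mathbb{Q}}(W_{r,s-1}^{\mathcal F_{\bar v}}),
\]
while block lower-triangularity alone gives $\rank_{\mathbb{Q}}(W_{r,s}^{\mathcal F})\ge\rank_{\mathbb{Q}}(W_{r-1,s-1}^{\mathcal F_v})+\rank_{\mathbb{Q}}(W_{r,s}^{\mathcal F_{\bar v}})$ and $\rank_{\mathbb{Q}}(W_{r,s}^{\mathcal F})\ge\rank_{\mathbb{Q}}(W_{r-1,s}^{\mathcal F_v})$.

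For the lower bound I would distinguish two regimes. When $|\mathcal F|$ lies within the resilience window of $\binom{n}{r}$ --- the range our main theorem controls --- one gets at once $\rank_{\mathbb{Q}}(W_{r,s}^{\mathcal F})=\rank_{\mathbb{Q}}(W_{r,s})=\binom{n}{s}\ge\binom{x}{s}$. Outside that window, apply the peeling to a vertex $v$ of maximum degree (and, for families where no vertex is heavy, iterate the peeling over several vertices, partitioning $\mathcal F$ by the least peeled vertex lying in each member) and feed the resulting smaller inclusion matrices into the induction; writing $\binom{y}{r-1}=|\mathcal F_v|$, $\binom{z}{r}=|\mathcal F_{\bar v}|$ so that $\binom{y}{r-1}+\binom{z}{r}=\binom{x}{r}$, the claim reduces to an extremal estimate of the shape $\binom{y}{s}+\binom{z}{s-1}\ge\binom{x}{s}$ (respectively $\sum_i\binom{x_i}{s-1}\ge\binom{x}{s}$ for a partition $\sum_i\binom{x_i}{r-1}=\binom{x}{r}$). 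Establishing these estimates --- exploiting the convexity of $t\mapsto\binom{\beta_r^{-1}(t)}{s}$ where $\beta_r(t)=\binom{t}{r}$, treating the spread-out case, and controlling the boundary ranges $r-1=s$ and the very small pieces --- is where essentially all of the work lies, and is the step I expect to be the main obstacle.

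For equality, suppose $\rank_{\mathbb{Q}}(W_{r,s}^{\mathcal F})=\binom{x}{s}$, so that every inequality above is tight. In the dense regime this forces $x=n$ and hence $\mathcal F=\binom{[n]}{r}$. In the other regime, equality in the extremal estimate pins down $|\mathcal F_v|=\binom{x-1}{r-1}$ and $|\mathcal F_{\bar v}|=\binom{x-1}{r}$, equality in the recursion forces (by the inductive hypothesis) $\mathcal F_v=\binom{Y}{r-1}$ and $\mathcal F_{\bar v}=\binom{Z}{r}$ with $|Y|=|Z|=x-1$, and then comparing shadows and invoking the equality case of Theorem~\ref{int_teo1} forces $Y=Z$, so $\mathcal F=\binom{Y\cup\{v\}}{r}$ is a simplex on an $x$-set. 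Beyond the extremal estimates, the remaining difficulties are organizing the multi-parameter induction so the degenerate cases are absorbed uniformly, and checking that the resilience window and the range reachable by peeling overlap.
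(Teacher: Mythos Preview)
The paper does not prove Theorem~\ref{int_teo2}; it is quoted from Keevash~\cite{n6}, and the only information the paper gives about Keevash's argument is that it rests on the rank-resilience statement recorded here as Theorem~\ref{r_teo1}. So there is no in-paper proof to compare against line by line; what can be said is that your route (induction via vertex peeling) is structurally different from the resilience-plus-stability approach the paper attributes to Keevash.

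Your block decomposition is correct, and the derived inequality
\[
\rank_{\mathbb Q}(W_{r,s}^{\mathcal F})\ \ge\ \rank_{\mathbb Q}(W_{r-1,s}^{\mathcal F_v})+\rank_{\mathbb Q}(W_{r,s-1}^{\mathcal F_{\bar v}})
\]
is valid: zeroing the $(1,1)$ block with $M=\tfrac{1}{r-s}W_{s,s-1}$ turns the $(2,1)$ block into a nonzero scalar multiple of $W_{r,s-1}^{\mathcal F_{\bar v}}$, and any matrix of the shape $\left(\begin{smallmatrix}0&A\\ B&C\end{smallmatrix}\right)$ has rank at least $\rank A+\rank B$.

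The substantive gap is exactly where you locate it, and it is genuine. The single-peel extremal estimate you write down, $\binom{y}{s}+\binom{z}{s-1}\ge\binom{x}{s}$ subject to $\binom{y}{r-1}+\binom{z}{r}=\binom{x}{r}$, is \emph{false} without further constraints: for $r=3$, $s=2$, $x=10$, take $\mathcal F$ to be $120$ pairwise disjoint triples; then the max-degree vertex has $|\mathcal F_v|=1$, so $y=2$, $z\approx 9.97$, and $\binom{2}{2}+z\approx 11<45=\binom{10}{2}$. Thus the ``no heavy vertex'' regime is not a side case but the entire difficulty, and your proposed fix (iterated peeling, partitioning by least peeled vertex) would have to produce an inequality of full Kruskal--Katona strength, for which you supply no mechanism. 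Moreover, after iterated peeling the pieces will typically be too small to satisfy the threshold hypothesis $|\cdot|\ge n_{r',s'}$, so the induction cannot be invoked on them. Finally, your equality analysis appeals to the equality case of Theorem~\ref{int_teo1}, whereas in the paper's narrative Theorem~\ref{int_teo1} is \emph{deduced from} Theorem~\ref{int_teo2}; this is not circular in the wider literature (Lov\'asz's theorem has independent proofs), but it means your argument is not self-contained in the way the peeling framework suggests. In sum: the linear-algebraic skeleton is sound, but the plan does not yet contain the idea that closes the extremal step, and that step is precisely where Keevash's work via resilience and stability is spent.
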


To see how Theorem \ref{int_teo1} follows from Theorem \ref{int_teo2}  (for large $x$), one simply observes that ${\rm rank}_{\mathbb Q}(W^{\mathcal{F}}_{r,s})$ is less than or equal to the number of nonzero columns of $W_{r,s}^{\mathcal{F}}$ (which is the size of the $s$-shadow of ${\mathcal F}$). In order to prove Theorem \ref{int_teo2}, Keevash \cite{n6} showed that the rank of the matrix $W_{r,s}$ is {\it resilient} or {\it robust}, that is, one can remove ``many" rows (in an arbitrary way) of $W_{r,s}$ without lowering its rank.

\begin{theorem}\label{r_teo1}
{\em (Keevash \cite{n6})} Suppose $0 \leq s \leq r$ and $2r+s \leq n$. If $\mathcal{F}$ is a family of $r$-subsets of $[n]$ with $|{[n] \choose r} \setminus \mathcal{F}| \leq {n \choose s}^{-1} {n \choose r-s}$ then $\rank_{\mathbb{Q}}(W_{r,s}^{\mathcal{F}})= {n \choose s}$. 
\end{theorem}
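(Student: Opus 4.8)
\emph{Proof sketch.} The case $s=0$ is trivial ($W_{r,0}$ is a single all-ones column of rank $1$), so assume $s\geq 1$. My plan is not to analyse the column dependencies of $W_{r,s}$ directly, but to combine Wilson's rank formula with an averaging argument over nonsingular minors. Since $2r+s\leq n$ forces $n\geq r+s$, the rank formula recalled in the Introduction applies over $K=\mathbb{Q}$: every binomial coefficient $\binom{r-j}{s-j}$ with $0\leq j\leq s$ is a positive integer, so there the index set is $Y=\{0,1,\dots,s\}$, the sum telescopes, and $\rank_{\mathbb{Q}}(W_{r,s})=\binom{n}{s}$. In particular $W_{r,s}$ has full column rank over $\mathbb{Q}$, so some set of $\binom{n}{s}$ of its rows spans a nonsingular $\binom{n}{s}\times\binom{n}{s}$ submatrix (using all columns).

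Next I would let $\mathfrak{N}$ be the family of all $\binom{n}{s}$-element sets $\mathcal{R}\subseteq\binom{[n]}{r}$ for which the submatrix $W_{r,s}^{\mathcal{R}}$ on the rows indexed by $\mathcal{R}$ is nonsingular; by the previous step $\mathfrak{N}\neq\emptyset$, say $|\mathfrak{N}|=N$. The crucial point is that $\mathfrak{N}$ is invariant under the natural $S_n$-action on $[n]$: since $S\subseteq R\iff\sigma S\subseteq\sigma R$, the matrix $W_{r,s}$ is unchanged by simultaneously relabelling its rows and columns through $\sigma\in S_n$, so $W_{r,s}^{\sigma\mathcal{R}}$ is just $W_{r,s}^{\mathcal{R}}$ with its columns permuted, hence nonsingular iff $W_{r,s}^{\mathcal{R}}$ is. As $S_n$ is transitive on $\binom{[n]}{r}$, every $r$-subset lies in the same number $d$ of members of $\mathfrak{N}$, and counting incidences between $r$-subsets and members of $\mathfrak{N}$ in two ways gives $d\binom{n}{r}=N\binom{n}{s}$.

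To finish, suppose $\mathcal{F}\subseteq\binom{[n]}{r}$ with $\mathcal{G}:=\binom{[n]}{r}\setminus\mathcal{F}$ and $|\mathcal{G}|\leq\binom{n}{s}^{-1}\binom{n}{r-s}$. A routine estimate gives $\binom{n}{r-s}<\binom{n}{r}$ (here $s\geq 1$ and $r<n/2$, from $2r+s\leq n$), so $|\mathcal{G}|<\binom{n}{r}/\binom{n}{s}$. By a union bound, the number of $\mathcal{R}\in\mathfrak{N}$ meeting $\mathcal{G}$ is at most $|\mathcal{G}|\,d=|\mathcal{G}|\,N\binom{n}{s}/\binom{n}{r}<N$; hence some $\mathcal{R}\in\mathfrak{N}$ is disjoint from $\mathcal{G}$, i.e. $\mathcal{R}\subseteq\mathcal{F}$. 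Then $W_{r,s}^{\mathcal{R}}$ is a nonsingular $\binom{n}{s}\times\binom{n}{s}$ submatrix of $W_{r,s}^{\mathcal{F}}$, so $\rank_{\mathbb{Q}}(W_{r,s}^{\mathcal{F}})\geq\binom{n}{s}$, and since $W_{r,s}^{\mathcal{F}}$ has only $\binom{n}{s}$ columns, equality holds.

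I do not anticipate a genuinely hard step: the content is in spotting the reformulation, after which the proof is a union bound powered by the $S_n$-symmetry. The things needing care are extracting $\rank_{\mathbb{Q}}(W_{r,s})=\binom{n}{s}$ and the existence of a nonsingular minor from Wilson's formula, the equivariance bookkeeping, and the elementary inequality $\binom{n}{r-s}<\binom{n}{r}$. It is worth noting that the same argument actually yields the larger threshold $|\mathcal{G}|<\binom{n}{r}/\binom{n}{s}$, and that — replacing $\binom{n}{s}$ throughout by $\rank_K(W_{r,s})$ — it goes through essentially verbatim over an arbitrary field $K$, invoking Wilson's rank formula over $K$; this is presumably the mechanism behind the general statement announced in the abstract.
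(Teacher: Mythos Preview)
Your argument is correct. Note, however, that the paper does not itself prove this theorem: Theorem~\ref{r_teo1} is quoted from Keevash~\cite{n6} as background, and the averaging argument you give is essentially Keevash's own proof. So there is nothing in the paper to compare against for this particular statement.

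Your closing speculation is off, though. You suggest that replacing $\binom{n}{s}$ by $\rank_K(W_{r,s})$ and rerunning the same symmetry/union-bound argument is ``presumably the mechanism behind the general statement announced in the abstract.'' It is not. The paper takes a completely different route to Theorem~\ref{r_teo2}: it uses Bier's bases $\{\langle A\rangle_r: A\in S(j),\ 0\le j\le r\}$ of $M^r$, with respect to which $\varphi_{s,r}$ is diagonal, and shows that if $\mathcal{F}^c$ can be mapped by some $\sigma\in S_n$ into $S(r)$ then the ``almost diagonal'' shape of $\varphi_{s,r}^{\mathcal{F}}$ in these bases forces $\rank_K(W_{r,s}^{\mathcal{F}})=\rank_K(W_{r,s})$ (Lemma~\ref{r_lemma2}). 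The hypothesis $|\mathcal{F}^c|\le (n-1)/r$ is then used only to guarantee that such a $\sigma$ exists. Your averaging argument, carried out over $K$, would yield resilience under the hypothesis $|\mathcal{F}^c|<\binom{n}{r}/\rank_K(W_{r,s})$; this threshold and the paper's $(n-1)/r$ are incomparable in general (try $r=3$, $s=1$ versus $r=3$, $s=2$ with $n=7$ over $\mathbb{Q}$), so the two approaches genuinely give different information. The Bier-basis method is what the authors need because it adapts, via the standard basis of the $GL(n,q)$-Specht module $S^{(n-r,r)}$, to the vector-space case in Theorem~\ref{r_teo3}, where no obvious analogue of your averaging argument is available.
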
 

Keevash \cite{Keevash} went further to ask whether Theorem \ref{r_teo1} remains true under the assumption that $|{[n] \choose r} \setminus \mathcal{F}| < {n-s \choose r-s}$. This question was answered in the affirmative by Grosu, Person and Szab\'o \cite{n7} for $n$ large (compared with $r$ and $s$).  In the end of \cite{n7}, the authors remarked that rank resilience property of the higher inclusion matrices has not been studied over fields of positive characteristic. In this paper we prove that the rank of $W_{r,s}$ is resilient over any field $K$.  In fact, the following theorem shows that if the size of $\mathcal{F}$ is close to ${n \choose r}$ then $ \mbox{rank}_{K}( W_{r,s}^{\mathcal{F}})=\mbox{rank}_{K}(W_{r,s})$ for an arbitrary field $K$. To simplify notation,  for any family $\mathcal{F}$ of $r$-subsets of $[n]$, we use $\mathcal{F}^c$ to denote the complement of $\mathcal{F}$ in ${[n] \choose r}$. Our first main result is stated below.

\begin{theorem}\label{r_teo2}  
Assume that $0 \leq s <  r \leq n/2$. Let $\mathcal{F}$ be a family of $r$-subsets of $[n]$, and $K$ be any field. If $|\mathcal{F}^c|\leq \frac{n-1}{r}$ then $ \rank_{K}( W_{r,s}^{\mathcal{F}})=\rank_{K}(W_{r,s})$.
\end{theorem}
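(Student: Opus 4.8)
The plan is to show that, for every $R_0\in\mathcal{F}^c$, the row of $W_{r,s}$ indexed by $R_0$ lies in the $K$-span of the rows indexed by $\mathcal{F}$; since the opposite inclusion of row spaces is trivial, this yields $\rank_K(W_{r,s}^{\mathcal{F}})=\rank_K(W_{r,s})$. Write $\rho_R\in K^{\binom{[n]}{s}}$ for the row of $W_{r,s}$ indexed by an $r$-set $R$, so $\rho_R(S)=1$ iff $S\subseteq R$. For $v=(v_R)_{R\in\binom{[n]}{r}}$ one has $\sum_R v_R\rho_R=0$ exactly when $\sum_{R\supseteq S}v_R=0$ for every $s$-subset $S$, i.e. when $v$ is a \emph{null $s$-design} (a signed combination of $r$-sets covering every $s$-set a net zero number of times). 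Hence, to express $\rho_{R_0}$ through the rows in $\mathcal{F}$, it suffices to produce a null $s$-design $v$ with $v_{R_0}\neq 0$ and $v_R=0$ for all $R\in\mathcal{F}^c\setminus\{R_0\}$. Running this over all $R_0\in\mathcal{F}^c$ then finishes the proof.

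To build such a $v$ I would use the classical ``cube'' (Steiner‑type) trade through $R_0$, which is the one family of null designs available over an arbitrary field. Partition $R_0=C\sqcup X$ with $|C|=r-s-1$ and $|X|=s+1$, and pick an injection $y\colon X\hookrightarrow [n]\setminus R_0$; this is possible because $n\ge 2r\ge r+s+1$, so $|[n]\setminus R_0|=n-r\ge s+1$. For $X'\subseteq X$ set $B_{X'}=C\cup (X\setminus X')\cup\{y(x):x\in X'\}$ and define $v=\sum_{X'\subseteq X}(-1)^{|X'|}e_{B_{X'}}$. The $B_{X'}$ are pairwise distinct and $B_{X'}=R_0$ only for $X'=\emptyset$, so $v_{R_0}=1$. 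For a fixed $s$-set $S$, the set of $X'$ with $S\subseteq B_{X'}$ is either empty or of the form $\{X': A\subseteq X'\subseteq X\setminus D\}$ for disjoint $A,D\subseteq X$ with $|A|+|D|\le |S|=s<|X|$, hence a subcube of positive dimension, over which $\sum(-1)^{|X'|}=0$; so $\sum_{R\supseteq S}v_R=0$ and $v$ is a null $s$-design. It remains to choose $C$ and $y$ so that no $B_{X'}$ with $X'\neq\emptyset$ lies in $\mathcal{F}^c$.

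The heart of the matter — and the step I expect to be most delicate — is the counting that shows ``most'' pairs $(C,y)$ work. Fix $B\in\mathcal{F}^c\setminus\{R_0\}$ and let $m=|R_0\setminus B|\ge 1$. If $B=B_{X'}$ then, looking at $B\cap R_0=C\cup (X\setminus X')$, one sees that $C$ must be contained in $R_0\cap B$, that $X'$ is forced to equal $X\setminus B$ (so $|X'|=m$), and that $\{y(x):x\in X'\}$ is forced to equal the specific $m$-set $B\setminus R_0$. Counting the pairs $(C,y)$ with these properties against all pairs $(C,y)$ gives that the fraction of ``$B$-bad'' pairs equals $\binom{s+1}{m}\big/\!\bigl(\binom{r}{m}\binom{n-r}{m}\bigr)$, which is $0$ when $m>s+1$ and at most $1/\binom{n-r}{m}$ otherwise. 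If $n\ge 2r+1$ then $1\le m\le s+1\le r\le n-r-1$, so each such fraction is at most $1/(n-r)$; summing over the at most $|\mathcal{F}^c|-1$ forbidden sets and using $|\mathcal{F}^c|-1\le\frac{n-1}{r}-1<n-r$ (a short computation from $2\le r\le n/2$), the union bound produces a good pair $(C,y)$. If $n=2r$ the hypothesis forces $|\mathcal{F}^c|\le\frac{2r-1}{r}<2$, hence $\mathcal{F}^c\setminus\{R_0\}=\emptyset$ and every trade works. Either way we get the required null $s$-design, so $\rho_{R_0}$ lies in the $K$-span of $\{\rho_R:R\in\mathcal{F}\}$.

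I should flag why the counting is the crux: a naive estimate — counting, coordinate of the cube by coordinate, how many values of $y(x)$ each forbidden set kills — overcounts severely and yields only a useless bound of order $n/2^{s}$. The observation that saves the argument is that a forbidden set $B$ does not forbid a single value of one $y(x)$; the equation $B=B_{X'}$ pins down an entire $m$-dimensional face of the cube simultaneously, which is exactly why the forbidden fraction carries the factor $1/\binom{n-r}{m}$ and why the bound $|\mathcal{F}^c|\le\frac{n-1}{r}$ is comfortably enough (indeed the same argument gives resilience up to $|\mathcal{F}^c|\le n-r$ when $n\ge 2r+1$).
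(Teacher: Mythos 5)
Your proof is correct, and it takes a genuinely different route from the paper. The paper works with the \emph{column} space: it uses Bier's basis of $M^r$ to show (Lemma~\ref{r_lemma2}) that the map $\varphi_{s,r}^{\mathcal F}$ has an almost-diagonal matrix whenever $\mathcal F^c$ can be carried by a permutation of $[n]$ into the set $S(r)$ of rank-$r$ subsets, and then checks combinatorially that $|\mathcal F^c|\le\frac{n-1}{r}$ permits such a permutation. You instead work with the \emph{row} space: you recover each deleted row $\rho_{R_0}$ from rows indexed by $\mathcal F$ via a cube trade (null $s$-design) through $R_0$ whose support avoids the rest of $\mathcal F^c$, and you establish the existence of such a trade by a union bound over the pairs $(C,y)$. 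I checked the key computations: the cube $v=\sum_{X'\subseteq X}(-1)^{|X'|}e_{B_{X'}}$ is indeed a null $s$-design because the set of $X'$ with $S\subseteq B_{X'}$ is a subcube of dimension $\ge |X|-s=1$; the forced-structure analysis ($C\subseteq R_0\cap B$, $X'=R_0\setminus B$, $y(X')=B\setminus R_0$) is right; and the bad fraction $\binom{s+1}{m}\big/\bigl(\binom{r}{m}\binom{n-r}{m}\bigr)\le 1/(n-r)$ for $1\le m\le s+1\le r\le n-r-1$, together with $|\mathcal F^c|-1<n-r$, closes the union bound (the $n=2r$ case is trivial as you note). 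Your approach buys several things: it is more elementary (no Bier basis, no induction on the rank parameter $m_B$), it works over every field because the trade has leading coefficient $1$ and all coefficients $\pm 1$, and for $n\ge 2r+1$ it actually yields the stronger threshold $|\mathcal F^c|\le n-r$, which improves on $\frac{n-1}{r}$ for $r\ge 2$. What the paper's approach buys in exchange is a framework (diagonalization of $\varphi_{s,r}$ against a distinguished basis, plus a group action moving $\mathcal F^c$ into a good position) that transfers to the $q$-analogue in Section~3, where no analogue of your explicit trades is available.
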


Our second main result in this paper is about rank resilience property of the higher inclusion matrices of $r$-subspaces vs. $s$-subspaces of an $n$-dimensional vector space over $\mathbb{F}_q$. 

\begin{definition}
Let $V$ be an $n$-dimensional vector space over $\mathbb{F}_q$, where $q=p^t$ is a prime power. Let $n \geq r \geq s \geq 0$ be integers and $\mathcal{F}$ a family of $r$-dimensional subspaces of $V$. The higher inclusion matrix of $r$-subspaces vs. $s$-subspaces, denoted by $W_{r,s}^{\mathcal{F}}(q)$, is the $(0,1)$-matrix with rows indexed by the $r$-dimensional subspaces $R$ of $V$, columns indexed by the $s$-dimensional subspaces $S$ of $V$, and with the $(R,S)$-entry equal to one if and only if $S\subseteq R$. In the case when $\mathcal{F}= {V \brack r}$, that is, $\mathcal{F}$ consists of all $r$-subspaces of $V$, we shall omit the superscript and simply write $W_{r,s}(q)$.
\end{definition}

The ranks of the matrices $W_{r,s}(q)$ have also been studied. However, the results are not as complete as in the set case. It was proven by Kantor \cite{n8} that if $s \leq \min\{r, n-r\}$ then the $\mathbb{Q}$-rank of $W{_{r,s}}(q)$ is ${n \brack s}$ (the number of $s$-dimensional subspaces in $V$).  Later, Frumkin and Yakir \cite{n10} proved that if $char(K)\neq p$, and $n \geq r+s$ then the $K$-rank of $W_{r,s}(q)$ is given by a $q$-analogue of Wilson's formula. Indeed,
\begin{equation}\label{q_rank}
\rank_{K}(W_{r,s}(q)) = \sum_{j\in Y }\left({n \brack i} - {n \brack i-1}\right),
\end{equation}
where $Y=\{i: 0 \leq i \leq s, {r-i \brack s-i} \neq_K 0 \}$. When the characteristic of $K$ is equal to $p$, the problem of finding the $K$-rank of $W_{r,s}(q)$ is open in general.  However, under the additional condition that $s=1$, Hamada \cite{Hamada} gave a formula for the $p$-rank of $W_{r,1}(q)$.

It is important to remark that although there are at least five different proofs (\cite{n1,n3,n20,n10,n2}) of Wilson's rank formula, only the proof by Frumkin and Yakir \cite{n10} has been generalized to find a formula for the rank of the matrix $W_{r,s}(q)$ over $K$ when $char(K)\neq p$. This is an indication that proving $q$-analogues of classical results in extremal set theory is often a difficult task.

In this paper, we prove that the $K$-rank of  $W_{r,s}(q)$ is also resilient when $char(K) \neq p$. Let $\mathcal{F}$ be a family of $r$-subspaces of $V$. We denote by $\mathcal{F}^c$  the complement of $\mathcal{F}$ in ${V \brack r}$. 

\begin{theorem}\label{r_teo3}
Let $V$ be an $n$-dimensional vector space over $\mathbb{F}_q$. Assume that $0 \leq s < r \leq n/2$. Let $\mathcal{F}$ be a family of $r$-subspaces of $V$ and $K$ a field with $\mbox{char}(K) \neq p$. If $|\mathcal{F}^c| \leq \frac{n}{r}-1$ then $ \rank_{K}( W_{r,s}^{\mathcal{F}}(q))=\rank_{K}(W_{r,s}(q))$.
\end{theorem}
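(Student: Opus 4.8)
The plan is to mirror the strategy that (presumably) works for Theorem \ref{r_teo2}, transporting it to the $q$-analogue setting. The inequality $\rank_K(W_{r,s}^{\mathcal{F}}(q)) \leq \rank_K(W_{r,s}(q))$ is automatic since deleting rows cannot increase rank, so the whole content is the reverse inequality. I would first reduce to showing that the row space (over $K$) of $W_{r,s}(q)$ is already spanned by the rows indexed by $\mathcal{F}$; equivalently, that every deleted row $R \in \mathcal{F}^c$ lies in the $K$-span of the rows indexed by $\mathcal{F}$. The natural way to produce such a dependence is to exhibit, for each $R_0 \in \mathcal{F}^c$, a vector in the left kernel of $W_{r,s}(q)$ whose support contains $R_0$ and is otherwise disjoint from $\mathcal{F}^c$ — or, more flexibly, a collection of kernel vectors whose supports jointly cover $\mathcal{F}^c$ but whose ``new'' coordinates (those outside some fixed spanning set) stay inside $\mathcal{F}^c$. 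Since $|\mathcal{F}^c| \leq n/r - 1$ is very small, the obstruction is just finding enough left-kernel vectors with tiny support concentrated near $\mathcal{F}^c$.

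The key structural input is the representation theory of $GL(n,q)$ acting on the space spanned by $r$-subspaces, which for $\mathrm{char}(K) \neq p$ is semisimple: by Frumkin–Yakir the permutation module $K^{{V \brack r}}$ decomposes into irreducible (``unipotent'') submodules indexed by partitions, and $W_{r,s}(q)$ is a morphism of $GL(n,q)$-modules whose kernel is the sum of those irreducibles $S^\lambda$ with $\lambda$ in a certain set determined by the vanishing conditions ${r-i \brack s-i} \neq_K 0$. So the left kernel of $W_{r,s}(q)$ is a known, nonzero, $GL(n,q)$-stable subspace $U \subseteq K^{{V \brack r}}$ (nonzero precisely because $r > s$ forces at least one trivial-isotypic complement, namely the kernel contains the all-relations coming from $W_{r,r-1}$ type maps). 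The heart of the argument is a \emph{covering} or \emph{support} lemma: because $U$ is $GL(n,q)$-invariant and $GL(n,q)$ acts transitively on $r$-subspaces, for any $R_0$ there is $u \in U$ with $R_0 \in \mathrm{supp}(u)$; I then need to bound $|\mathrm{supp}(u)|$ and, by averaging over a suitable subgroup or by taking a minimal-support element of $U$, arrange that the support is small enough that a greedy/inclusion–exclusion argument over the $\leq n/r-1$ bad subspaces succeeds. This is exactly where the bound $n/r - 1$ should enter: it will come out as (roughly) the reciprocal of the relevant ``density'' of a minimal-support kernel vector, i.e. $|\mathrm{supp}(u)| \cdot |\mathcal{F}^c| < {n \brack r}$ type counting forcing that one can always find a kernel vector avoiding all but one prescribed bad row.

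Concretely, the steps in order: (1) record that $\rank_K(W_{r,s}^{\mathcal{F}}(q)) \leq \rank_K(W_{r,s}(q))$ trivially, and reformulate the goal as: the rows $\{e_R : R \in \mathcal{F}^c\}$ reduce modulo the left kernel $U$ to the $K$-span of $\{e_R : R \in \mathcal{F}\}$; (2) invoke the Frumkin–Yakir decomposition to identify $U = \ker(W_{r,s}(q)^{\mathsf T})$ as an explicit nonzero sum of unipotent irreducibles, and in particular produce one small-support element $u_0 \in U$ — the optimal candidate is the analogue of the set-case kernel vector built from the $W_{r,r-1}$ relations, which is an alternating sum over the ${n-r+1 \brack 1}$-ish subspaces through a fixed $(r-1)$-space, giving $|\mathrm{supp}(u_0)|$ on the order of $[n-r+1]_q$, a $q$-binomial of controlled size; (3) using the $GL(n,q)$-action, translate $u_0$ so its support hits a prescribed bad subspace $R_0$; (4) run a peeling argument over $\mathcal{F}^c$: order the bad subspaces, and at each stage find a translate of $u_0$ (or a product-of-parabolic conjugate) supported on the current bad subspace and otherwise disjoint from the not-yet-eliminated bad subspaces — here the hypothesis $|\mathcal{F}^c| \leq n/r - 1$, together with the transitivity count, guarantees such a translate exists because the number of translates whose support meets a given other bad subspace is a strict fraction of all translates; (5) conclude that each $e_{R_0}$ is congruent mod $U$ to a $K$-combination of good rows, hence $\rank_K(W_{r,s}^{\mathcal F}(q)) = \rank_K(W_{r,s}(q))$.

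The main obstacle I anticipate is step (2)–(4): controlling the \emph{size and location of the support} of a suitable kernel vector in the $q$-setting. In the set case one has a very clean combinatorial kernel vector (the signed sum over an $(r{+}1)$-set or the ``Sperner-type'' relation) with explicitly known, small support, and the divisibility bound $(n-1)/r$ falls out of an easy double count; for subspaces the analogous relations exist but their supports are $q$-binomial-sized and the relevant incidence counts are governed by Gaussian binomials, so verifying that the arithmetic still closes under the stated bound $n/r - 1$ — rather than something weaker — is the delicate point. A secondary subtlety is that when $\mathrm{char}(K) = \ell$ divides $|GL(n,q)|$ (but $\ell \neq p$), the module $K^{{V \brack r}}$ need not be semisimple, so I would either restrict first to $K = \mathbb{Q}$ (or a characteristic-zero field) and then descend via a rank-is-lower-semicontinuous / Smith-normal-form argument over $\mathbb{Z}$, or argue directly that the left kernel always contains the explicit combinatorial kernel vector regardless of semisimplicity, which is enough for the peeling argument.
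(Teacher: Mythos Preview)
Your approach is genuinely different from the paper's, and the gap you yourself flag as ``the main obstacle'' is real and not closed. The paper never touches the left kernel or any peeling argument. It works on the \emph{column} side: from the Frumkin--Yakir filtration $W_0\subset\cdots\subset W_s$ of $M_q^s$ it extracts a basis of $M_q^s$ consisting of vectors $\langle X\rangle_s$ with $X\in B_j$, $|B_j|={n\brack j}-{n\brack j-1}$, and then shows that the images $\varphi_{s,r}^{\mathcal F}(\langle X\rangle_s)={r-j\brack s-j}\langle X\rangle_r-\sum_{T\in\mathcal F^c,\,X\subset T}{r-j\brack s-j}T$ remain linearly independent for $j\in Y$. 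The crucial new ingredient is a \emph{separation lemma}: $U_{r-1}\cap\bigoplus_{\pi\in S(r)^+}M_q^r(\pi)=\{0\}$, where $S(r)^+$ is the set of paths whose first $r$ steps are all east. This is proved using the Submodule Theorem (forcing $U_{r-1}\subseteq(S^{(n-r,r)})^\perp$) together with the Brandt--Dipper--James--Lyle standard basis of $S^{(n-r,r)}$, which supplies, for every $L$ with $\pi(L)\in S(r)^+$, a Specht vector with top term $e_L$. Once the separation lemma is in hand, the theorem follows whenever some $g\in GL(n,q)$ carries $\mathcal F^c$ into $S(r)_q^+$, i.e.\ into the set of $r$-subspaces whose pivot columns all lie in $\{r+1,\ldots,n\}$. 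The bound $|\mathcal F^c|\le n/r-1$ enters only through an elementary pigeonhole on pivot positions: each $X\in\mathcal F^c$ occupies an $r$-subset $\pi(X)\subset[n]$, so $\bigl|\bigcup_{X\in\mathcal F^c}\pi(X)\bigr|\le r\,|\mathcal F^c|\le n-r$, leaving $r$ columns that a column permutation can move to the front.

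Your scheme, by contrast, needs a concrete small-support left-kernel vector plus a translate count guaranteeing avoidance of the remaining bad subspaces, and neither is supplied. The candidate you name (an ``alternating sum over the $r$-spaces through a fixed $(r-1)$-space'') is not in the left kernel of $W_{r,s}(q)$ for general $s<r$: there is no sign to alternate over the $[n-r+1]_q$ such $r$-spaces, and the constant-coefficient version lies in the kernel only when $\mathrm{char}(K)$ divides the relevant Gaussian integer. What one does know unconditionally is $S^{(n-r,r)}\subseteq\bigcap_{s<r}\ker(W_{r,s}(q)^{\mathsf T})$, but its known generators have $q$-binomial-sized support and you give no minimal-support analysis. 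Even granting a workable kernel vector, the natural incidence count produces a bound of $q$-binomial type (on the order of $[n-r]_q$), not the purely additive bound $n/r-1$; that bound in the paper is an artifact of the pivot-column pigeonhole and has no counterpart in your density picture. Finally, the ``prove over $\mathbb{Q}$ and descend via Smith normal form'' fallback does not close the positive-characteristic case: rank can only drop when reducing modulo a prime, so a characteristic-zero equality $\rank_{\mathbb{Q}}W_{r,s}^{\mathcal F}(q)=\rank_{\mathbb{Q}}W_{r,s}(q)$ says nothing about whether $\rank_K W_{r,s}^{\mathcal F}(q)$ drops below $\rank_K W_{r,s}(q)$ for $\mathrm{char}(K)>0$.
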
 

The techniques we use to prove Theorem~\ref{r_teo2} and Theorem~\ref{r_teo3} are completely different from those used by Keevash in \cite{n6} and Grosu, Person and Szab\'{o} in \cite{n7}. The main tool we use to prove Theorem \ref{r_teo2} is Bier's bases which give a diagonal form of the higher inclusion matrix $W_{r,s}$. These bases were found by Bier in \cite{n1}. We will show that if the size of $\mathcal{F}$ is close to ${n \choose r}$ then Bier's bases also give an almost diagonal form for the matrix $W_{r,s}^{\mathcal{F}}$. This fact will be used to compute the rank of $W_{r,s}^{\mathcal{F}}$. 

The proof of Theorem \ref{r_teo3} is more difficult. One difficulty is that there is no known $q$-analogue of the Bier basis for us to use. To overcome this difficulty we use some results from representation theory of $GL(n,q)$. The work of James \cite{n11} and Frumkin and Yakir \cite{n10} explicitly shows a connection between the rank of higher inclusion matrices and the Specht modules of $GL(n,q)$.  In fact, Frumkin and Yakir \cite{n10} proposed a uniform approach to finding ranks of both $W_{r,s}$ and $W_{r,s}(q)$. The basic idea is that $W_{r,s}$ and $W_{r,s}(q)$ are matrices associated with an $S_n$- and  a $GL(n,q)$-module homomorphisms, respectively. From this point of view, one can use some properties of the Specht modules of $GL(n,q)$ to prove that the column space of  $W_{r,s}^{\mathcal{F}}(q)$ contains at least $\rank_K(W_{r,s}(q))$ linearly independent vectors if the size of $\mathcal{F}^c$ is small enough. To be specific, the properties of the Specht modules that we use are the Submodule Theorem (see Theorem \ref{M_submodule_teo1}) and the {\it standard  bases} for the $GL(n,q)$-Specht modules $S^{(n-r,r)}$, with $r \leq n/2$, that were found by Brandt, Dipper, James and Lyle in \cite{n9}.   Once we prove the result on the column space of $W_{r,s}^{\mathcal{F}}(q)$, Theorem \ref{r_teo3} follows easily since the rank of $W_{r,s}^{\mathcal{F}}(q)$ is clearly bounded above by the rank of $W_{r,s}(q)$. 

\section{Rank Resilience: the Set Case}

\subsection{Bier's Bases}

Let $K$ be an arbitrary field. For any $0 \leq r \leq n$, we denote by  $M^r$ the $K$-vector space spanned by the $r$-subsets of $[n]$. Hence, the set of $r$-subsets of $[n]$ forms a ``canonical" basis of $M^r$. Let $\varphi_{j,r} : M^j \rightarrow M^r$ be the linear transformation such that, for every $j$-subset $A$ of $[n]$, $$\varphi_{j,r}(A) = \sum_{A \subseteq R} R,$$ where the sum is over all $r$-subsets containing $A$; the definition of $\varphi_{j,r}$ is then extended to all elements of $M^j$ by linearity. Note that $W_{r,j}$ is the matrix of $\varphi_{j,r}$ with respect to the canonical bases of $M^j$ and $M^r$.

For any $j$-subset $A$ of $[n]$, with $0 \leq j \leq r$,  we denote by $\langle A \rangle_r$ the image of $A$ under the linear map $\varphi_{j,r}$. In \cite{n3}, Frankl defined the rank of a subset of $[n]$.

\begin{definition}\label{r_definition}
(Frankl \cite{n3}) Let $A$ be a subset of $[n]$. One associates a walk $w(A)$ on the $x$-$y$ plane with $A$. The walk $w(A)$ goes from the origin to $(n-|A|, |A|)$ by steps of length one, with the $i$-th step going east or north according as $i \notin A$ or $i \in A$. The {\it rank} of $A$, denoted by $\rk(A)$, is defined as $|A| -\ell$ where $\ell$ is the largest integer such that $w(A)$ reaches the line $y=x+\ell$.
\end{definition}

From the above definition, it follows that if $A$ is a $j$-subset of $[n]$ then its rank is at most $\min\{j, n-j\}$.  For every $0 \leq j \leq n/2$, we define
\begin{equation*}
    S(j) = \left\{ A \in {[n] \choose j} : \rk(A)=j \right\}.
\end{equation*}
Note that the elements of $S(j)$ are in one-to-one correspondence to the standard tableaux of shape $(n-j,j)$.  This is one way to see that $|S(j)|= {n \choose j} - {n \choose j-1}$.  Therefore, for $0\leq r \leq n/2$, we have $|\cup_{j=0}^r S(j)|= {n \choose r}$, which is precisely the dimension of the vector space $M^r$. The following theorem gives a basis of $M^r$ indexed by the elements of $S(j)$ with $j$ ranging from $0$ to $r$.

\begin{theorem}\label{r_teo4}
{\em (Bier \cite{n1})} Let $0 \leq r \leq n/2$. The vectors in $\cup_{j=0}^r \{ \langle A \rangle_r : A \in S(j)\}$ form a $K$-basis of $M^r$.
\end{theorem}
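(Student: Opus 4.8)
The plan is to translate the statement into the invertibility over $K$ of an incidence matrix, and then prove that invertibility by induction on $n$, isolating one delicate boundary case that is handled by a complementation trick; the whole argument is uniform in the field $K$, which gives the theorem for every field at once. For the reduction, note that $\big|\bigcup_{j=0}^{r}S(j)\big|=\sum_{j=0}^{r}\big(\binom nj-\binom n{j-1}\big)=\binom nr=\dim_K M^r$, so it suffices to show that the vectors $\langle A\rangle_r$, $A\in\mathcal A:=\bigcup_{j=0}^{r}S(j)$, are $K$-linearly independent; equivalently, writing $T^{[n]}_r$ for the $\binom nr\times\binom nr$ incidence matrix with rows indexed by the $r$-subsets $R$ of $[n]$, columns by $A\in\mathcal A$, and $(R,A)$-entry $[A\subseteq R]$, it suffices to prove that $T^{[n]}_r$ is invertible over $K$.

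The induction rests on three facts about Frankl's rank, each immediate from the lattice-path description: (i) for $A\subseteq[n-1]$ one has $\rk_{[n]}(A)=\rk_{[n-1]}(A)$ (a trailing east step does not raise the maximum of the path), so the members of $\mathcal A$ avoiding $n$ are exactly $\bigcup_{j\le r}S(j)$ formed in $[n-1]$; (ii) for $A'\subseteq[n-1]$ with $|A'|+1\le n/2$ one has $A'\cup\{n\}\in S(|A'|+1)$ in $[n]$ iff $A'\in S(|A'|)$ in $[n-1]$ (a trailing north step lands weakly below the diagonal), so the members of $\mathcal A$ containing $n$ are exactly $\{A'\cup\{n\}:A'\in\bigcup_{j\le r-1}S(j)^{[n-1]}\}$; and (iii) $\bigcup_j S(j)$ is closed under taking subsets (the path of a subset stays weakly below that of the set). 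Ordering the rows of $T^{[n]}_r$ as (those missing $n$, then those containing $n$) and the columns of $\mathcal A$ accordingly, (i) and (ii) give the block form
\[
T^{[n]}_r=\begin{pmatrix} T^{[n-1]}_r & 0\\ U & T^{[n-1]}_{r-1}\end{pmatrix},
\]
the upper-right block vanishing because a set containing $n$ cannot sit inside one avoiding $n$; so $T^{[n]}_r$ is invertible over $K$ as soon as both diagonal blocks are.

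Now induct on $n$ (the case $r=0$ is trivial, since $M^0=K$). If $r\le(n-1)/2$, both $T^{[n-1]}_r$ and $T^{[n-1]}_{r-1}$ are covered by the induction hypothesis and we are done. The one remaining case is $r=n/2$ with $n$ even: then $T^{[n-1]}_{r-1}$ is still covered, but $T^{[n-1]}_r$ concerns $M^r([n-1])$ with $r=\lceil(n-1)/2\rceil$, just outside the admissible range. For this I use the linear isomorphism $\iota\colon M^r([n-1])\to M^{(n-1)-r}([n-1])$, $R\mapsto[n-1]\setminus R$, which by inclusion–exclusion satisfies
\[
\iota\big(\langle A\rangle_r\big)=\sum_{B\subseteq A}(-1)^{|B|}\,\langle B\rangle_{(n-1)-r};
\]
by (iii) every $B$ occurring here again lies in $\mathcal A^{[n-1]}$, which (since $S(j)^{[n-1]}=\emptyset$ for $j>\lfloor(n-1)/2\rfloor$) equals $\bigcup_{j\le(n-1)-r}S(j)^{[n-1]}$. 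Hence the change of basis from $\{\langle B\rangle_{(n-1)-r}\}_{B\in\mathcal A^{[n-1]}}$ to $\{\iota(\langle A\rangle_r)\}_{A\in\mathcal A^{[n-1]}}$ is the matrix $\big((-1)^{|B|}[B\subseteq A]\big)_{B,A}$, which is triangular with $\pm1$ on the diagonal once $\mathcal A^{[n-1]}$ is ordered by cardinality, so it is invertible over $K$. Since $(n-1)-r\le(n-1)/2$, the induction hypothesis gives that $\{\langle B\rangle_{(n-1)-r}:B\in\mathcal A^{[n-1]}\}$ is a $K$-basis of $M^{(n-1)-r}([n-1])$; transporting back through $\iota$ and this triangular base change shows that $\{\langle A\rangle_r:A\in\mathcal A^{[n-1]}\}$ is a $K$-basis of $M^r([n-1])$, i.e.\ $T^{[n-1]}_r$ is invertible. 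Therefore $T^{[n]}_r$ is invertible, completing the induction.

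The main obstacle is exactly this boundary case $r=n/2$: deleting a point pushes the parameters out of the range $r\le n/2$, so the naive induction does not close by itself. What saves it is that complementation turns the out-of-range instance into the in-range instance $r\mapsto(n-1)-r$ through a \emph{unitriangular}, hence characteristic-free, change of basis, with the downward closure (iii) of $\bigcup_j S(j)$ being precisely what keeps the inclusion–exclusion sum inside the index set $\mathcal A$. The remaining ingredients — verifying the three path lemmas and checking the block decomposition and the index bookkeeping — are routine.
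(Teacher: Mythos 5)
Your proof is correct, but it takes a genuinely different route from the paper's. The paper proves the \emph{spanning} property directly: it establishes Bier's linear relations (Lemma \ref{r_lemma1}) and runs a double induction on the cardinality of a subset $B$ and on the parameter $m_B$ recording where the path of $B$ first rises above the diagonal, rewriting each $\langle B\rangle_r$ in terms of the $\langle A\rangle_r$ with $A\in\bigcup_j S(j)$ via the auxiliary sums $[U\cup X]$; the basis property then follows from the count $|\bigcup_{j\le r}S(j)|=\binom nr$. You instead prove \emph{linear independence}, i.e.\ invertibility over $K$ of the incidence matrix $T^{[n]}_r$, by deleting the point $n$: your three path lemmas are correct (I checked in particular that the trailing-north-step criterion in (ii) is exactly the hypothesis $|A'|+1\le n/2$, which holds for all columns in play since $|A|\le r\le n/2$), the block lower-triangular decomposition with diagonal blocks $T^{[n-1]}_r$ and $T^{[n-1]}_{r-1}$ is right, and your complementation identity $\iota(\langle A\rangle_r)=\sum_{B\subseteq A}(-1)^{|B|}\langle B\rangle_{(n-1)-r}$ follows from $\sum_{B\subseteq C}(-1)^{|B|}=[C=\emptyset]$, with downward closure of $\bigcup_jS(j)$ keeping all $B$ inside the index set and the cardinality ordering making the transition matrix unitriangular up to signs, hence invertible over every $K$. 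Both arguments are characteristic-free. The paper's method yields explicit relations expressing any $\langle B\rangle_r$ in the Bier basis; your method is structurally the ballot-number recursion lifted to the matrix level, and as a bonus it establishes the companion statement just outside the range ($r=\lceil m/2\rceil$ on $m=n-1$ points, with index set $\bigcup_{j\le m-r}S(j)$), which the paper's formulation does not address. For the later applications in the paper (the diagonal form of $\varphi_{s,r}$ in Lemma \ref{r_lemma2}) only the existence of the basis is needed, so your proof would serve equally well there.
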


We will refer to the basis given in Theorem \ref{r_teo4} as the Bier basis of $M^r$. For the sake of completeness we give the details of Bier's proof of Theorem \ref{r_teo4}. 

\begin{lemma}\label{r_lemma1}
{\em (Bier \cite{n1})} Let $r$ be a positive integer. For any $j$-subset $A$ of  $[n]$ with $j < r$,
\begin{equation}\label{r_ecu1}
    {r-j \choose \ell} \langle A \rangle_r + \sum_{i=1}^{\ell} (-1)^i {r-j-i \choose \ell-i} \sum_{T_i} \langle T_i\rangle_r = 0 \quad \mbox{for all } \ell=1, \ldots, r-j
\end{equation}
where the inner sum is over all $T_i$ with $|T_i|=j+i$ and $A \subset T_i$.
\end{lemma}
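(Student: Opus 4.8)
The plan is to exploit a simple double-counting identity relating the maps $\varphi_{j,r}$ for various intermediate levels, and then extract the stated relation by an inclusion–exclusion argument on the binomial coefficients. First I would record the basic composition rule: for $j \le m \le r$, iterating the definition of $\varphi$ gives $\varphi_{j,r}(A) = \varphi_{m,r}\big(\varphi_{j,m}(A)\big)$, and more concretely, for a fixed $j$-subset $A$,
\begin{equation*}
\sum_{\substack{B \supseteq A \\ |B| = j+i}} \langle B \rangle_r \;=\; \binom{r-j-i}{r-j-i}\langle A\rangle_r \cdot (\text{number of chains}) \;-\;\cdots,
\end{equation*}
but the cleaner route is to count, for each $r$-subset $R \supseteq A$, the number of intermediate sets $T$ with $A \subseteq T \subseteq R$ and $|T| = j+i$: this number is $\binom{r-j}{i}$, independent of $R$. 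Hence
\begin{equation}\label{r_proofeq1}
\sum_{\substack{T \supseteq A \\ |T| = j+i}} \langle T \rangle_r \;=\; \binom{r-j}{i}\,\langle A \rangle_r, \qquad 0 \le i \le r-j.
\end{equation}

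With \eqref{r_proofeq1} in hand, the lemma is purely a binomial-coefficient manipulation. Writing $c_i := \sum_{|T|=j+i,\, T \supseteq A} \langle T \rangle_r$, identity \eqref{r_proofeq1} says $c_i = \binom{r-j}{i}\langle A\rangle_r$, so the claimed relation
\begin{equation*}
\binom{r-j}{\ell}\langle A\rangle_r + \sum_{i=1}^{\ell} (-1)^i \binom{r-j-i}{\ell-i}\, c_i \;=\; 0
\end{equation*}
reduces to the scalar identity $\binom{r-j}{\ell} + \sum_{i=1}^{\ell}(-1)^i \binom{r-j-i}{\ell-i}\binom{r-j}{i} = 0$, i.e. $\sum_{i=0}^{\ell}(-1)^i\binom{r-j}{i}\binom{r-j-i}{\ell-i} = 0$ for $1 \le \ell \le r-j$. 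I would verify this last identity by the standard trinomial-revision step $\binom{r-j}{i}\binom{r-j-i}{\ell-i} = \binom{r-j}{\ell}\binom{\ell}{i}$, which turns the sum into $\binom{r-j}{\ell}\sum_{i=0}^{\ell}(-1)^i\binom{\ell}{i} = \binom{r-j}{\ell}(1-1)^\ell = 0$ whenever $\ell \ge 1$. Note this works over an arbitrary field $K$ since the identity holds in $\mathbb{Z}$ and is then reduced mod $\operatorname{char}(K)$.

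The only genuine content is therefore establishing \eqref{r_proofeq1}, which I expect to be routine: expand each $\langle T \rangle_r = \sum_{R \supseteq T} R$, swap the order of summation, and for each $r$-subset $R \supseteq A$ count the $(j+i)$-subsets $T$ with $A \subseteq T \subseteq R$ — there are $\binom{r-j}{i}$ of them since such a $T$ is determined by choosing $i$ of the $r-j$ elements of $R \setminus A$. The main thing to be careful about is the range of $\ell$ (so that all binomial coefficients appearing are the intended ones and $\langle A \rangle_r$ is not itself in the inner sums), and the edge behaviour when $j+i$ can equal $r$; both are handled by the hypothesis $j < r$ and $\ell \le r-j$. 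No representation theory or properties of the rank function $\rk$ are needed for this lemma — it is a statement about the maps $\varphi_{j,r}$ alone.
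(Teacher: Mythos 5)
Your proof is correct and follows essentially the same route as the paper's: both arguments fix an $r$-subset $R\supseteq A$, count that it appears $\binom{r-j}{i}$ times in $\sum_{T_i}\langle T_i\rangle_r$, and reduce the lemma to the alternating identity $\sum_{i=0}^{\ell}(-1)^i\binom{r-j}{i}\binom{r-j-i}{\ell-i}=0$. The only cosmetic difference is that you justify this identity explicitly by trinomial revision, whereas the paper simply invokes inclusion--exclusion.
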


\begin{proof}
Let $R$ be any $r$-subset containing $A$. In the first term on the left hand side of (\ref{r_ecu1}), $R$ appears ${r-j \choose \ell}$ times. Moreover, in each sum $\sum \langle T_i\rangle_r$, $R$ appears ${r-j \choose i}$ times. Therefore, $R$ appears in the left hand side of (\ref{r_ecu1}) exactly
\[
        {r-j \choose \ell} + \sum_{i=1}^{\ell} (-1)^i {r-j-i \choose \ell-i} {r-j \choose i} = \sum_{i=0}^{\ell} (-1)^i { r-j \choose i} {r-j-i \choose \ell-i}
\]
times. The above sum is easily seen to be zero by the principle of inclusion and exclusion.
\end{proof}

\begin{proof}[\bf Proof of Theorem \ref{r_teo4}]  

We will show that for any $0 \leq t \leq r$
\begin{equation}\label{r_ecu2}
    \mbox{span}_K \{ \langle A \rangle_r : A \in S(j), 0 \leq j \leq t \} = \mbox{span}_K\{ \langle A \rangle_r : A \mbox{ a }j\mbox{-subset of }[n], 0 \leq j \leq t\}.
\end{equation}
The conclusion of the theorem follows immediately from (\ref{r_ecu2}) because by taking $t=r$ we see that the vectors in the set on the left hand side of (\ref{r_ecu2}) span $M^r$ and since $|\cup_{j=0}^r S(j)|= {n \choose r}$, they form a basis.

We will prove (\ref{r_ecu2}) by induction. Let us start with some definitions that we will use. For any set $A=\{a_1 < \cdots < a_j\}$ with $r(A) < |A|$, there exists a unique integer $m=m_A$, $1 \leq m \leq j$ such that $a_m < 2m$ and $a_i \geq 2i$ for all $i > m$. On the other hand, if $r(A)= |A|$ then $a_i \geq 2i$ for all $i$; so $m=m_A=0$ in this case.

To prove (\ref{r_ecu2}) it is enough to show that
\begin{equation}\label{r_ecu3}
    \mbox{span}_K\{ \langle A \rangle_r : A \mbox{ a }j\mbox{-subset of }[n], 0 \leq j \leq s\} \leq \mbox{span}_K\{ \langle A \rangle_r : A \in S(j), 0 \leq j \leq t \}
\end{equation}
for all $s$ ranging from $0$ to $t$. 

The proof of (\ref{r_ecu3}) is done by induction on $s$ and on the parameter $m$ defined above for any subset of $[n]$. Note that the base case, i.e., the case where $s=0$, is trivially true. Now, let $s$ be given with  $0 < s \leq t$ and suppose by induction hypothesis that the following holds:
        \begin{enumerate}
          \item[] (a). $\langle B \rangle_r \in \mbox{span}_K\{\langle A \rangle_r : A \in S(j), 0 \leq j \leq t \}, \mbox{ for all } B, |B| < s$.
          \item[] (b). $\langle B \rangle_r \in \mbox{span}_K\{\langle A \rangle_r : A \in S(j), 0 \leq j \leq t \}, \mbox{ for all } B, |B|=s \mbox{ and } m_B < m$ \footnote{We may assume (b) because for every $s$-subset  $B$ with $m_B=0$ we have that $B \in S(j)$; therefore, $\langle B \rangle_r \in \mbox{span}_K\{\langle A \rangle_r : A \in S(j), 0 \leq j \leq t \}$.}.
        \end{enumerate}
 Using these assumptions we will show that 
 \begin{equation}\label{extra}
        \langle B \rangle_r \in \mbox{span}_K\{\langle A \rangle_r : A \in S(j), 0 \leq j \leq t \}
  \end{equation} 
 for any subset $B$ with $|B|=s$ and $m_B=m$, which is enough to prove (\ref{r_ecu3}).

Let $B=I \cup X$ with
\[
        I=\{ b_1 < b_2 < \cdots < b_m\} \mbox{ and } X=\{ b_{m+1} < \cdots < b_s\}
\]
such that $b_m < 2m$ and $b_i \geq 2i$ for all $b_i \in X$ (so $|B|=s$ and $m_B=m$). For any $U \subseteq I$ we define
\[
    [U \cup X] = \sum_{U \subseteq J} \langle J \cup X \rangle_r
\]
where the sum is taken over all sets $J=\{j_1 < j_2 < \cdots < j_m\}$ with $j_m < 2m$ containing the set $U$. Notice that $J \cup X$ is an $s$-subset with $m_{J \cup X}=m$.

\vspace{0.1in}

\noindent{\bf Claim (i).} Let $U$ be a proper subset of $I$ then
\[
        [U \cup X] \in \mbox{span}_K\{\langle A \rangle_r : A \in S(j), 0 \leq j \leq t \}
\]

To prove the claim, applying Lemma \ref{r_lemma1} with $A=U \cup X$ and $\ell= m - |U|$, we obtain

$${k-|U \cup X| \choose \ell} \langle U \cup X \rangle_r + \sum_{i=1}^{\ell} (-1)^i {k-| U \cup X|-i \choose \ell-i} \sum_{T_i} \langle T_i\rangle_r=0.$$
Rewriting the above equation, we have
$$\sum_{i=0}^{\ell-1} (-1)^i {k-|U \cup X |-i \choose \ell-i} \sum_{T_i} \langle T_i\rangle_r = (-1)^{\ell +1} \sum_{T_{\ell}} \langle T_{\ell}\rangle_r$$

The terms on the left hand side of the above equation are contained in $\mbox{span}_K\{\langle A \rangle_r : A \in S(j), 0 \leq j \leq t \}$ by induction hypothesis since the sets $T_i$ have cardinality strictly less than $s$. We can rewrite the term on the right as
$$\sum_{T_{\ell}} \langle T_{\ell}\rangle_r = \sum_{T_{\ell} : m_{T_{\ell}} < m} \langle T_{\ell} \rangle_r + \sum_{T_{\ell}: m_{T_{\ell}}=m} \langle T_{\ell} \rangle_r$$

The first term on the right of the above equation belongs to $\mbox{span}_K\{\langle A \rangle_r : A \in S(j), 0 \leq j \leq t \}$ by induction hypothesis. Now, because $[U \cup X]= \sum_{T_{\ell}: m_{T_{\ell}}=m} \langle T_l \rangle_r$, we conclude that
\[
        [U \cup X] = \sum_{i=0}^{\ell-1} (-1)^{i+\ell+1} {r-| U \cup X |-i \choose \ell-i} \sum_{T_i} \langle T_i \rangle_k - \sum_{T_{\ell}: m_{T_{\ell}} < m} \langle T_{\ell} \rangle_k
\]
which proves Claim (i).

\vspace{0.1in}
\noindent{\bf Claim (ii).} For any $I \subset \{1,2, \ldots, 2m-1\}$ with $|I|=m$,
\[
        \sum_{U \subseteq I} (-1)^{|U|}[U \cup X] = 0.
\]

Claim (ii) can be proved as follows. By definition we have,
\begin{equation}\label{r_ecu4}
    \sum_{U \subseteq I} (-1)^{|U|}[U \cup X] = \sum_{U \subseteq I} (-1)^{|U|} \sum_{U \subseteq J} \langle J \cup X \rangle_r
\end{equation}

Consider any set $R \in {[n] \choose r}$. We want to count how many times the subset $R$ appears in the expression (\ref{r_ecu4}). We assume that $X \subseteq R$ and $|R \cap \{1,2,\ldots, 2m-1\}|$ is at least $m$ (otherwise, $R$ does not appear in (\ref{r_ecu4})). Define $\ell_1=|R \cap I|$ and $\ell_2= |(R \setminus I) \cap \{1, \ldots, 2m-1\}|$. We see that $R$ appears in (\ref{r_ecu4}) exactly
\[
    {\ell_1 \choose 0}{\ell_1+\ell_2 \choose m} - {\ell_1 \choose 1}{\ell_1+\ell_2-1 \choose m-1} + \cdots = \sum_{i=0}^m (-1)^i {\ell_1 \choose i}{\ell_1+\ell_2 -i \choose m-i}
\]
times. Now the sum on the right hand side of the above equation is equal to $0$ by the principle of inclusion and exclusion. This proves Claim (ii).

We will apply Claims (i) and (ii) to prove (\ref{extra}). By definition it is clear that $\langle B \rangle_r = [I \cup X]$. Hence, it follows from Claim (ii) that
\[
    \langle B \rangle_r = [I \cup X] = (-1)^{m+1} \sum_{U \subset I} (-1)^{|U|}[U \cup X]
\]
Therefore, (\ref{extra}) follows from Claim (i).

\end{proof}

\subsection{Proof of Theorem \ref{r_teo2}}

In this subsection we use the Bier bases to prove the resilience property of ranks of the higher inclusion matrices $W_{r,s}$ over an arbitrary field $K$. The following simple result from linear algebra will be needed.
\begin{lemma}\label{extra_lemma}
Let $u_1, \ldots, u_m$ be linearly independent vectors of a $K$-vector space $U$. Let $z_1, \ldots, z_m$ be vectors in $U$ such that $\mbox{span}\{u_1, \ldots, u_m\} \cap \mbox{span}\{z_1, \ldots, z_m\}=\{0\}$. Then $u_1+z_1, \dots, u_m + z_m$ are linearly independent vectors in $U$.
\end{lemma}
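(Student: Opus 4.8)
The plan is to argue directly from a hypothetical linear dependence relation. Suppose $c_1,\dots,c_m \in K$ satisfy
\[
\sum_{i=1}^{m} c_i (u_i + z_i) = 0 .
\]
First I would rearrange this as $\sum_{i=1}^m c_i u_i = -\sum_{i=1}^m c_i z_i$. The left-hand side lies in $\operatorname{span}\{u_1,\dots,u_m\}$ and the right-hand side lies in $\operatorname{span}\{z_1,\dots,z_m\}$, so the common value lies in $\operatorname{span}\{u_1,\dots,u_m\} \cap \operatorname{span}\{z_1,\dots,z_m\}$, which is $\{0\}$ by hypothesis. Hence $\sum_{i=1}^m c_i u_i = 0$.

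Next, since $u_1,\dots,u_m$ are linearly independent, this forces $c_1 = \cdots = c_m = 0$. Therefore the only linear dependence among $u_1+z_1,\dots,u_m+z_m$ is the trivial one, so these $m$ vectors are linearly independent, as claimed.

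There is essentially no obstacle here; the one point worth stating explicitly is that the argument does \emph{not} require the vectors $z_1,\dots,z_m$ to be linearly independent, only that their span meets the span of the $u_i$ trivially — this is exactly the form in which the lemma will be applied, where the $z_i$ will be ``small'' error terms coming from the rows of $W_{r,s}^{\mathcal{F}}$ that are missing relative to $W_{r,s}$.
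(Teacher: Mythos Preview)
Your argument is correct and is the standard direct proof of this elementary fact. The paper itself does not supply a proof of this lemma at all; it is stated without proof as ``a simple result from linear algebra,'' so there is nothing to compare against.
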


By definition of $\varphi_{s,r}$, it is easy to see that for $0 \leq s \leq r \leq n/2$, we have
\[
    \varphi_{s,r}(\langle A \rangle_s ) = {r-j \choose s-j} \langle A \rangle_r
\]
for every $A \in S(j)$ with $j=0,1,\ldots,s$. Therefore, the matrix of $\varphi_{s,r}$ with respect to the Bier basis $\{ \langle A \rangle_s : A \in S(j), 0 \leq j \leq s\}$ of $M^s$ and the Bier basis $\{ \langle A \rangle_r : A \in S(j), 0 \leq j \leq r\}$ of $M^r$ has a diagonal form. This proves that $\mbox{dim}_K(\mbox{im}(\varphi_{s,r}))$ is equal to
\[
    \sum_{j \in Y}  |S(j)| =\sum_{j \in Y} \left({n \choose j} - {n \choose j-1}\right)
\]
where $Y=\{j : 0 \leq j \leq s, {r-j \choose s-j} \neq_K 0\}$. This is precisely the $K$-rank formula given by Wilson \cite{n2} for the matrix $W_{r,s}$.

Let $S_n$ denote the symmetric group on $[n]$, and let $\sigma \in S_n$. For any $r$-subset $A$ of $[n]$ we define $\sigma(A)=\{\sigma(a): a \in A\}$. Similarly, if $\mathcal{F}$ is a family of $r$-subsets then $\sigma(\mathcal{F})=\{\sigma(A): A \in \mathcal{F}\}$.   The next lemma shows that we have a lot of freedom in the way we can remove rows from $W_{r,s}$ without lowering its $K$-rank.

\begin{lemma}\label{r_lemma2}
Assume that $0 \leq s < r \leq n/2$. Let $\mathcal{F}$ be a family of $r$-subsets of $[n]$. If there exist some $\sigma \in S_n$  such that $\sigma(\mathcal{F}^c) \subseteq S(r)$ then $\rank_{K}( W_{r,s}^{\mathcal{F}})=\rank_{K}(W_{r,s})$.
\end{lemma}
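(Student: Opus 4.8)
The plan is to reduce, via the permutation $\sigma$, to the situation where Bier's basis (Theorem~\ref{r_teo4}) controls everything, and then to exhibit $\rank_K(W_{r,s})$ explicit linearly independent vectors in the column space of $W_{r,s}^{\mathcal{F}}$, using Lemma~\ref{extra_lemma}. First I would make two observations. Permuting the ground set $[n]$ by $\sigma$ simply relabels the rows and columns of $W_{r,s}$, so it changes neither $\rank_K(W_{r,s})$ nor $\rank_K(W_{r,s}^{\mathcal{F}})$, and $(\sigma(\mathcal F))^c=\sigma(\mathcal F^c)$; hence we may assume outright that $\mathcal{F}^c\subseteq S(r)$. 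Also, $W_{r,s}^{\mathcal{F}}$ is obtained from $W_{r,s}$ by deleting rows, so $\rank_K(W_{r,s}^{\mathcal{F}})\le\rank_K(W_{r,s})$, and it suffices to produce $\rank_K(W_{r,s})=\sum_{j\in Y}|S(j)|$ linearly independent vectors in the column space of $W_{r,s}^{\mathcal{F}}$, where $Y=\{j:0\le j\le s,\ {r-j\choose s-j}\neq_K 0\}$.

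Next I would set up the relevant vectors. Identify the column space of $W_{r,s}^{\mathcal{F}}$ with the subspace $\pi(\mbox{im}(\varphi_{s,r}))$ of $M^r$, where $\pi\colon M^r\to M^r$ is the projection, in the canonical basis, that kills exactly the coordinates indexed by the $r$-subsets in $\mathcal F^c$. For $A\in S(j)$ with $j\in Y$ the scalar ${r-j\choose s-j}$ is invertible in $K$, so $\varphi_{s,r}(\langle A\rangle_s)={r-j\choose s-j}\langle A\rangle_r$ shows that $\langle A\rangle_r\in\mbox{im}(\varphi_{s,r})$, and hence $\pi(\langle A\rangle_r)$ belongs to the column space of $W_{r,s}^{\mathcal F}$. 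Write $\pi(\langle A\rangle_r)=\langle A\rangle_r+z_A$, so that $z_A=\pi(\langle A\rangle_r)-\langle A\rangle_r$ is, by the definition of $\pi$, supported on $\mathcal F^c$; that is, $z_A\in\mbox{span}_K\{R:R\in\mathcal F^c\}$. The key point is that $\langle R\rangle_r=\varphi_{r,r}(R)=R$ for every $R\in S(r)$ (the only $r$-subset containing an $r$-subset $R$ is $R$ itself), so $\mbox{span}_K\{R:R\in\mathcal F^c\}\subseteq\mbox{span}_K\{\langle R\rangle_r:R\in S(r)\}$, whence each $z_A$ lies in $\mbox{span}_K\{\langle R\rangle_r:R\in S(r)\}$.

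To finish, I would invoke linear independence of the Bier basis. The vectors $\{\langle A\rangle_r:A\in S(j),\ j\in Y\}$ form part of the Bier basis of $M^r$, hence are linearly independent; and since $s<r$, their $j$-indices lie in $\{0,1,\dots,s\}$, which is disjoint from $\{r\}$, so $\mbox{span}_K\{\langle A\rangle_r:A\in S(j),\ j\in Y\}$ and $\mbox{span}_K\{\langle R\rangle_r:R\in S(r)\}$ intersect only in $\{0\}$; in particular $\mbox{span}_K\{\langle A\rangle_r:A\in S(j),\ j\in Y\}\cap\mbox{span}_K\{z_A:A\in S(j),\ j\in Y\}=\{0\}$. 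Applying Lemma~\ref{extra_lemma}, with the $\langle A\rangle_r$ in the role of the $u_i$ and the $z_A$ in the role of the $z_i$, shows that the vectors $\pi(\langle A\rangle_r)=\langle A\rangle_r+z_A$ ($A\in S(j)$, $j\in Y$) are linearly independent. Since there are $\sum_{j\in Y}|S(j)|=\rank_K(W_{r,s})$ of them and they all lie in the column space of $W_{r,s}^{\mathcal F}$, we get $\rank_K(W_{r,s}^{\mathcal F})\ge\rank_K(W_{r,s})$, and combined with the reverse inequality above this proves the lemma. I do not anticipate a serious obstacle: the whole argument rests on Bier's theorem and the trivial identity $\langle R\rangle_r=R$ for $R\in S(r)$, which is exactly what places $\mathcal F^c$ inside a coordinate subspace disjoint, in the Bier basis, from the part of $M^r$ carrying $\mbox{im}(\varphi_{s,r})$; the only thing to keep track of is the characteristic-dependent set $Y$, so that the chosen $\langle A\rangle_r$ really do lie in the column space.
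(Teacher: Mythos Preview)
Your proof is correct and follows essentially the same approach as the paper's: both reduce to $\mathcal{F}^c\subseteq S(r)$ via the permutation $\sigma$, both compute the image of the Bier basis vectors $\langle A\rangle_s$ (your $\pi\circ\varphi_{s,r}$ is exactly the paper's map $\varphi_{s,r}^{\mathcal F}$), and both conclude by applying Lemma~\ref{extra_lemma} together with the observation that the error terms lie in $\mathrm{span}\{R:R\in S(r)\}$, which is a complementary piece of the Bier basis. Your presentation differs only cosmetically---you handle the permutation first and divide through by the unit $\binom{r-j}{s-j}$ before projecting---but the mathematical content is identical.
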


\begin{proof}
First, assume that $\mathcal{F}^c \subseteq S(r)$. We define the following linear map from $M^s$ to $M^r$
\[
 \varphi_{s,r}^{\mathcal{F}}(S)  = \sum_{S \subset R} R - \sum_{T \in \mathcal{F}^c,S \subset T} T, \mbox{ for all }S\subset [n], |S|=s, 
\]
where in the first sum $R$ runs over all $r$-subsets of $[n]$ containing $S$, and in the second sum $T$ runs over all $r$-subsets of $[n]$ containing $S$ such that $T \in \mathcal{F}^c$. It is clear from definition that $\dim_{K}(\mbox{im}\varphi_{s,r}^{\mathcal{F}}) = \rank_{K}(W_{r,s}^{\mathcal{F}})$.

Note that for every $j$-subset $A$ with $0 \leq j \leq s$ and $\mbox{rk}(A)=j$ we have
\begin{equation}\label{r_ecu5}
\varphi_{s,r}^{\mathcal{F}}(\langle  A \rangle_s) = {r-j \choose s-j} \langle  A \rangle_r - \sum_{T \in \mathcal{F}^c, A \subset T } {r-j  \choose s-j} T
\end{equation}
Recall that by assumption $\mathcal{F}^c \subseteq S(r)$, so any $T \in \mathcal{F}^c$ is actually a basis element of the Bier basis of $M^r$. Thus the matrix of $\varphi_{s,r}^{\mathcal{F}}$ with respect to the Bier bases of $M^r$ and $M^s$ is almost diagonal.

Let $W$ be the subspace of $M^r$ spanned by the following set of linearly independent vectors $$\left\{ {r-j \choose s-j} \langle  A \rangle_r : \mbox{ } A \in S(j), j \in Y \right\},$$
where $Y= \{j : 0 \leq j \leq s,  {r-j \choose s-j}\neq_K 0\}$. It is clear from the definition of the Bier basis of $M^r$ that
$$ W \cap  \mbox{span} \left\{ \sum_{T \in \mathcal{F}^c, A \subset T } {r-j  \choose s-j} T  : \mbox{ } A \in S(j), j \in Y \right\}=\{0\}.$$
Therefore, by Lemma \ref{extra_lemma} and (\ref{r_ecu5}) we conclude that the vectors in
\[
\bigcup_{j \in Y} \left\{ \varphi_{s,r}^{\mathcal{F}} (\langle A \rangle_s ) : \mbox{ } A \in S(j) \right\}
\]
are linearly independent. This implies that 
\[
   \dim_{K}(\mbox{im}\varphi_{s,r}^{\mathcal{F}}) \geq \sum_{j \in Y} \left({n \choose j } - {n \choose j-1}\right)
\]
Hence, Lemma \ref{r_lemma2} follows from the trivial upper bound $\rank_{K} (W_{r,s}^{\mathcal{F}}) \leq \rank_{K} (W_{r,s})$ and Wilson's rank formula.

Now, if $\mathcal{F}^c \nsubseteq S(r)$ then by assumption there exists $\sigma \in S_n$ such that $\sigma(\mathcal{F}^c) \subseteq S(r)$. We use $\sigma$ to define the following invertible linear transformations,
\[
\begin{array}{ccccc}
\Phi_r^{\sigma}: & M^r & \rightarrow & M^r\\
                          & R     & \mapsto & \sigma(R) 			
\end{array},
\quad
\begin{array}{ccccc}
\Phi_s^{\sigma}: & M^s & \rightarrow & M^s\\
                          & S     & \mapsto & \sigma(S) 			
\end{array}
\]

From the above definitions it follows that
\[
\varphi_{s,r}^{\mathcal{F}}= (\Phi_r^{\sigma})^{-1} \circ \varphi_{s,r}^{\sigma(\mathcal{F})} \circ \Phi_s^{\sigma}
\]
Thus,  $\dim_{K}(\mbox{im}\varphi_{s,r}^{\mathcal{F}}) =\dim_{K}(\mbox{im}\varphi_{s,r}^{\sigma(\mathcal{F})})$. The proof of Lemma \ref{r_lemma2} is now complete. 
\end{proof}

Now, we apply Lemma \ref{r_lemma2} to prove Theorem \ref{r_teo2}.
\begin{proof}[\bf Proof of Theorem \ref{r_teo2}] In order to apply Lemma~\ref{r_lemma2}, we will show that when $|\mathcal{F}^c|\leq \frac{n-1}{r}$, it is always possible to find $\sigma\in S_n$ such that $\sigma(\mathcal{F}^c)\subseteq S(r)$. First, note that the case where $n=2r$ is completely trivial because in that case $\mathcal{F}^c$ contains at most one $r$-subset by the assumption that $|\mathcal{F}^c|\leq \frac{n-1}{r}$. Secondly, note that it is enough to prove Theorem \ref{r_teo2} for all $n$ of the form  $\alpha r + 1$, with $\alpha \geq 2$. In fact, the result for other values of $n$ follows immediately from the result in the cases where $n$ is of the form $\alpha r +1$, $\alpha\geq 2$.

Recall that an $r$-subset $A$ of $[n]$ is in $S(r)$ if and only if the path associated with $A$ does not cross the main diagonal; this latter condition in turn is equivalent to the following:  for every $i=1, \ldots, 2r$ we have that $|A \cap [i]| \leq \lfloor \frac{i}{2} \rfloor$.

Given $n=\alpha r + 1$ the critical case occurs when $\mathcal{F}^c$ consists of $\alpha$ disjoint $r$-subsets. Even in this case there exists $\sigma \in S_n$ such that $\sigma(\mathcal{F}^c) \subseteq S(r)$. For example,  for $\alpha=2$ it is possible to map the two r-subsets in $\mathcal{F}^c$  to the $r$-subsets $\{2,4,6,\ldots,2r\}$ and $\{3,5,7,\ldots, 2r+1\}$ which are contained in $S(r)$. The conclusion of the theorem now follows from Lemma~\ref{r_lemma2}.
\end{proof}





\section{Rank Resilience: the Vector Space Case}

The goal of this section is to prove Theorem 6. Throughout this section, $V$ is an $n$-dimensional vector space over $\mathbb{F}_q$, where $q=p^t$ is a prime power.

\subsection{The $GL(n,q)$-module $M_q^r$}

In this section, we assume that $K$ is a field of characteristic coprime to $q=p^t$, containing a primitive $p^{\rm th}$ root of unity. For every $0 \leq r \leq n$, we denote by $M_q^r$ the $K$-vector space spanned by the $r$-dimensional subspaces of $V$. Hence, the set of $r$-dimensional subspaces forms a ``canonical" basis of $M_q^r$. 

Let $GL(n,q)$ be the group of all invertible linear transformations from $V$ to $V$. Each element of $GL(n,q)$ induces a permutation on the set of $r$-dimensional subspaces of $V$. Thus, $M_q^r$ is a $GL(n,q)$-permutation module for $0 \leq r \leq n$.

The Specht module $S^{(n-r,r)}$ is the submodule of $M_q^r$ defined by
\[
S^{(n-r,r)}= \bigcap_{0\leq j<r} \left\{ \ker \phi : \phi \in \mbox{Hom}_{GL(n,q)}(M_q^r, M_q^j)  \right\},
\]
where $\mbox{Hom}_{GL(n,q)}(M_q^r, M_q^j)$ is the set of all $GL(n,q)$-module homomorphisms from $M_q^r$ to $M_q^j$. We remark that the Specht modules $S^{(n-r,r)}$ over the complex are irreducible; for $K$ of positive characteristics, the Specht modules are not necessarily irreducible. In \cite{n11}, James proved that the dimension of $S^{(n-r,r)}$ over $K$ is equal to ${n \brack r} - {n \brack r-1}$. He also proved the following important result about  Specht modules.

\begin{theorem}\label{M_submodule_teo1}
{\em (The Submodule Theorem)} Let $\langle \cdot, \cdot \rangle$ be the inner product on $M_q^r$ such that for any two $r$-dimensional subspaces $X, Y$ of $V$ we have that $\langle X, Y \rangle= 1$ if $X=Y$ and $0$, otherwise. If $W$ is a $GL(n,q)$-submodule of $M_q^r$ then either $S^{(n-r,r)} \subseteq W$ or $W \subseteq (S^{(n-r,r)})^{\perp}$, where $(S^{(n-r,r)})^{\perp}$ is the orthogonal complement of $S^{(n-r,r)}$ with respect to $\langle \cdot, \cdot \rangle$.
\end{theorem}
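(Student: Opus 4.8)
The plan is to reduce the statement to a single key lemma, after which the dichotomy drops out by a short adjointness argument. This is exactly the strategy behind James's Submodule Theorem for the symmetric group, and I would transport it to $GL(n,q)$ acting on $M_q^r$. \emph{Key Lemma:} there exist a nonzero vector $e \in S^{(n-r,r)}$, a canonical basis vector $m_0$ of $M_q^r$ (that is, an $r$-subspace) occurring in $e$ with coefficient $1$, and an element $\beta$ of the group algebra $K[GL(n,q)]$ such that: (i) $\beta m \in K e$ for every $m \in M_q^r$; (ii) $\beta$ is self-adjoint, $\langle \beta m, m'\rangle = \langle m, \beta m'\rangle$ for all $m, m'$; (iii) $\beta m_0 = e$, and consequently $\beta m = \langle m, e\rangle\, e$ for every $m$; and (iv) $S^{(n-r,r)} = K[GL(n,q)]\,e$, i.e.\ the $GL(n,q)$-translates of $e$ span the Specht module.

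Granting the Key Lemma, the theorem is immediate. Let $W$ be a $GL(n,q)$-submodule of $M_q^r$; being $GL(n,q)$-stable it is stable under $\beta$. If $\beta u \neq 0$ for some $u \in W$, then by (i) $\beta u = c\,e$ with $c \in K^{\times}$, while $\beta u \in W$; hence $e \in W$, so $g e \in W$ for all $g \in GL(n,q)$, and (iv) gives $S^{(n-r,r)} \subseteq W$. If instead $\beta u = 0$ for all $u \in W$, then (iii) yields $\langle u, e\rangle\, e = 0$, so $\langle u, e\rangle = 0$ for all $u \in W$ since $e \neq 0$; using the $GL(n,q)$-invariance of $\langle \cdot, \cdot \rangle$ one gets $\langle u, g e\rangle = \langle g^{-1}u, e\rangle = 0$ for all $g$, because $g^{-1}u \in W$. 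Thus $W$ is orthogonal to every translate of $e$, and (iv) gives $W \subseteq (S^{(n-r,r)})^{\perp}$, as desired.

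The proof of the Key Lemma is the heart of the matter, and the step I expect to be the main obstacle. Over $\mathbb{C}$, or whenever $K[GL(n,q)]$ is semisimple, one may take $\beta$ to be a scalar multiple of a minimal idempotent and $e$ a highest weight vector; but since $\mathrm{char}(K)$ is only assumed coprime to $q$ and may well divide $|GL(n,q)|$, one must exhibit $\beta$ and $e$ explicitly, in the spirit of the operator $\kappa_t$ and the polytabloid $e_t$ for the symmetric group. Since $r \leq n/2$ I would fix a decomposition $V = L_1 \oplus \cdots \oplus L_r \oplus V_0$ with $\dim L_i = 2$ — the $q$-analogue of an $(n-r,r)$-tableau, the planes $L_i$ playing the role of the length-$2$ columns — let $e$ be the corresponding James Specht vector, a character-weighted sum of $r$-subspaces supported on the Schubert cell attached to the decomposition, which one checks lies in $\ker \phi$ for every $\phi \in \mathrm{Hom}_{GL(n,q)}(M_q^r, M_q^j)$ with $j < r$, and let $\beta$ be the associated ``column operator'', a weighted sum over the parabolic stabilizing the relevant flag, built from the Levi factors $GL(L_i)$ paired with their appropriate linear characters together with the unipotent radical. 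Property (i) then comes from a vanishing argument: if an $r$-subspace $R$ is not in general position with respect to the decomposition — the $q$-analogue of a tabloid having two cells of one column in the same row — a symmetry of $\beta$ forces $\beta R = 0$, while for $R$ in general position $\beta R$ is a scalar multiple of $e$; extending linearly gives (i). Property (ii) is immediate from the $GL(n,q)$-invariance of the form and the fact that $\beta$ is a sum of group elements with weights invariant under inversion. Property (iii) follows as in the symmetric-group case: write $\beta m = \lambda(m)\,e$ with $\lambda$ linear, compare the coefficient of $m_0$ on both sides, and use (ii) together with $\beta m_0 = e$ to identify $\lambda(m) = \langle m, e\rangle$. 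Finally, (iv) follows from the two inputs already quoted in the paper — James's formula $\dim_K S^{(n-r,r)} = {n \brack r} - {n \brack r-1}$ and the standard basis of $S^{(n-r,r)}$ of Brandt–Dipper–James–Lyle — since each standard basis vector lies in the cyclic submodule $K[GL(n,q)]\,e$, so the latter already has the full dimension $\dim_K S^{(n-r,r)}$ and hence equals $S^{(n-r,r)}$.

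The genuinely delicate points are pinning down the character weights in $e$ and $\beta$ so that $e$ really lies in all the kernels $\ker \phi$, and organizing the cancellation in the vanishing argument for (i): unlike the set case, the subspaces lying below a ``transversal'' $r$-subspace need not respect the chosen decomposition, so the bookkeeping has to be carried out over a genuinely non-abelian group rather than over $(\mathbb{Z}/2)^r$, which is what makes the $q$-analogue substantially more involved than the classical one.
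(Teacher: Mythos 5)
The paper gives no proof of this statement: it is quoted verbatim from James's lecture notes \cite{n11}, so there is no in-paper argument to compare yours against. Your strategy --- a self-adjoint ``column operator'' $\beta$ with $\beta M_q^r\subseteq Ke$ for a cyclic generator $e$ of the Specht module, followed by the standard dichotomy --- is indeed the shape of the known proof, and your deduction of the theorem from the Key Lemma is correct and complete. The difficulty is that the Key Lemma \emph{is} the theorem, and you have only sketched it, as you yourself concede.

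Two of the gaps are concrete. First, property (ii) is not ``immediate'': the adjoint of $\beta=\sum_g c_g\,g$ with respect to the permutation form is $\sum_g c_{g^{-1}}g$, and if the weights are values of a nontrivial additive character $\theta$ of $\mathbb{F}_q$ (which they must be in some form --- this is exactly where the standing hypothesis that $K$ contain a primitive $p$th root of unity enters), then $c_{g^{-1}}$ involves $\theta(-x)\neq\theta(x)$ for odd $p$. So $\beta^{\ast}=\bar{\beta}$, the operator built from the conjugate character, not $\beta$ itself; the dichotomy survives, but only after you run the second case with $\bar{e}=\bar{\beta}m_0$ in place of $e$ and prove that $\bar{e}$ also generates $S^{(n-r,r)}$. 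Second, property (iv) rests on the bare assertion that every standard basis vector $z_L$ of Brandt--Dipper--James--Lyle lies in $K[GL(n,q)]\,e$. Unlike the symmetric-group case, where all polytabloids are visibly translates $\sigma e_{t_0}$ of a single one, the $z_L$ are not presented as $G$-translates of anything, so cyclicity of $S^{(n-r,r)}$ needs its own argument (it is a theorem of James, not a formal consequence of the dimension formula plus the existence of some basis); quoting \cite{n9} here also risks circularity, since that work builds on James's module theory. Add to this that property (i) --- the cancellation forcing $\beta m$ into $Ke$ --- is the computational heart and is only described, and what you have is a correct reduction plus a plausible program, not a proof. Since the paper itself treats the result as a citation, the clean fix is to do the same; if you want a self-contained proof, the three items above are what must actually be written out.
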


Recently, Brandt et al.  \cite{n9} found a basis of $S^{(n-r,r)}$ which is indexed by the standard tableaux of shape $(n-r,r)$. We will recall some definitions and results from \cite{n9}  to describe this ``standard basis''. 

Let $0\leq r \leq n-r$. Consider a rectangular $r\times (n-r)$ array of boxes, which are depicted in the following figure.
\begin{center}
\begin{tikzpicture}[scale=0.5]
     \draw[very thin] (0,0) grid (2.5,1.5);
     \node (n1) at (1.5,2) {{ $\vdots$}};
     \node (n1) at (3.1,1) {{ $\cdots$}};
     \node (n1) at (3.1,3.1) {{ $\cdots$}};
     \draw[very thin] (0,2.5) grid (2.5,4);
     \draw[very thin] (4.5,0) grid (6,1.5);
     \node (n1) at (5.3,2) {{ $\vdots$}};
     \draw[very thin] (4.5,2.5) grid (6,4);
     \draw[|-|, color=gray](0,4.5) -- (6,4.5) node[above, xshift=-1.3cm, color=black] {$n-r$};
     \draw[|-|, color=gray](-0.5,0) -- (-0.5,4) node[left, yshift=-1cm, color=black] {$r$};
\end{tikzpicture}
\end{center}

It is well known that every $r$-subset $A$ of $[n]$ corresponds to a path connecting the top left corner with the right bottom corner of the above array of boxes. Specifically, the $i$-th step is $S$ (south) or $E$ (east) according as $i \in A$ or $i \notin A$. For example, the $r$-subsets contained in $S(r)$ correspond to the paths that do not cross the main diagonal of the array of boxes. We denote by $P(n-r,r)$ the set of all paths connecting the top left with the bottom right corner of an $r\times (n-r)$ array of boxes. Then by the correspondence described above, $|P(n-r, r)|={n \choose r}$.

\begin{example}
Let $n=5$ and $r=2$. Consider the path marked in red in the following figure.
\begin{center}
\begin{tikzpicture}[scale=0.7]
     \draw[very thin] (0,0) grid (3,2);
     \draw[->, color=red, thick](0,2) -- (1,2);
     \draw[->, color=red, thick](1,2) -- (1,1);
     \draw[->, color=red, thick](1,1) -- (2,1);
     \draw[->, color=red, thick](2,1) -- (2,0);
     \draw[->, color=red, thick](2,0) -- (3,0);
\end{tikzpicture}
\vspace{0.3cm}
\end{center}
The path, denoted by $\pi$, is $ESESE$ where $E$ stands for east and $S$ for south. Hence, the $2$-subset of $[5]$ corresponding to $\pi$ is $\{2,4\}$.
\end{example}

We impose the reverse lexicographic order on the set $P(n-r,r)$ of paths. For example, the elements of $P(2,2)$ are ordered in the following way:
\[
	SSEE < SESE < SEES < ESSE < ESES < EESS.
\]

Given any path $\pi \in P(n-r,r)$ we can fill the boxes below $\pi$ by using elements from $\mathbb{F}_q$, and we use $c(\pi)$ to denote the number of such fillings. For example, for $n=7$ and $r=3$,
\[
\begin{ytableau}
a_1 &   &   &  \\
a_2 &  a_3 &   & \\
a_4 &  a_5 & a_6  &
\end{ytableau}
\]
where $a_i \in \mathbb{F}_q$, $\pi=ESESESE$, and $c(\pi)=q^6$. The following well-known result establishes a bijection between these objects and the $r$-dimensional subspaces of $V$.  A proof can be found in \cite{n9}. 

\begin{lemma}\label{resilience_lemma1} {\em (Brandt et al. \cite{n9})}
Choosing a path $\pi \in P(n-r,r)$ and then filling the boxes below the path with elements of $\mathbb{F}_q$ is a way of encoding an $r$-dimensional subspace of $V$. Every such subspace can be uniquely encoded in this way.
\end{lemma}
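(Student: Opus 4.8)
The plan is to realise the encoding as a normal form for the subspaces of $V$ coming from Gaussian elimination — in other words, as the Schubert cell decomposition of the Grassmannian — but with the coordinates of $V=\mathbb{F}_q^n$ processed in the order $n,n-1,\dots,1$, since the convention ``boxes \emph{below} the path'' (rather than above) corresponds to reducing from the right rather than to ordinary row echelon form.

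First I would attach to each $r$-dimensional subspace $U\le V$ a canonical $r\times n$ matrix $M_U$ with row space $U$: the unique matrix whose rows span $U$ such that, if $a_1<a_2<\cdots<a_r$ denote the columns carrying the rightmost nonzero entry of the successive rows, one has $(M_U)_{k,a_k}=1$, $(M_U)_{k,a_j}=0$ for $j\neq k$, and $(M_U)_{k,j}=0$ for all $j>a_k$. Existence is the usual elimination argument, run from the right; for uniqueness one checks that the pivot set $A=\{a_1,\dots,a_r\}$ is an invariant of $U$ (for instance $a_r$ is the largest index occurring as the position of the last nonzero coordinate of a vector of $U$, then $a_{r-1}$ is the analogous index for $U\cap\{v:v_{a_r}=0\}$, and so on), and that, once $A$ is fixed, the submatrix of $M_U$ on the columns of $A$ is forced to be the identity, which pins the matrix down. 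The entries of $M_U$ not forced to equal $0$ or $1$ are precisely those in positions $(k,j)$ with $j\notin A$ and $j<a_k$; write $B=[n]\setminus A=\{b_1<\cdots<b_{n-r}\}$ for the non-pivot columns.

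Next I would match these free positions with the boxes below the path. The set $A$ corresponds to the path $\pi=\pi(A)\in P(n-r,r)$ whose south steps occur exactly at the positions in $A$. Labelling the rows of the $r\times(n-r)$ array $1,\dots,r$ from the top and the columns $1,\dots,n-r$ from the left, a short lattice-path computation — by the time $\pi$ has taken its $c$-th east step, which happens at position $b_c$, it has taken $|\{j:a_j<b_c\}|$ south steps — shows that box $(k,c)$ lies below $\pi$ exactly when $b_c<a_k$. Hence the boxes below $\pi$ are indexed by the pairs $(k,b_c)$ with $b_c<a_k$, that is, precisely by the free positions of $M_U$, so that filling the boxes below $\pi$ with elements of $\mathbb{F}_q$ is the same as prescribing the free entries of a matrix in the above normal form with pivot set $A$. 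The map $U\mapsto\bigl(\pi(A),\ \text{the filling read off from the free entries of }M_U\bigr)$ is then well defined by uniqueness of $M_U$, and has an evident inverse: given a path $\pi$ and a filling, build the matrix with $1$'s in the pivot columns (the south steps of $\pi$), $0$'s to the right of each pivot and in the remaining pivot columns, and the given entries in the below-the-path positions; this matrix is already in normal form and has rank $r$ (its restriction to the pivot columns is the identity), so its row space is an $r$-dimensional subspace, and the two constructions are mutually inverse.

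The only genuine work is the combinatorial bookkeeping of the third step — verifying that ``box below the path'' is exactly the inequality $b_c<a_k$, and hence that the below-the-path boxes are in canonical bijection with the free entries of $M_U$ — together with keeping the orientation straight, since ``below'' forces elimination from the right. Everything else is routine linear algebra, and the statement is in any case classical; a full proof is in \cite{n9}.
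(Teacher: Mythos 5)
Your proposal is correct and matches the approach the paper indicates: the paper does not give a proof of this lemma but cites Brandt et al.\ \cite{n9} and illustrates the encoding by exhibiting the ``right-pivot'' reduced echelon matrix of a sample $3$-subspace of $\mathbb{F}_q^7$, from which one reads off the path (south steps at pivot columns) and the filling (the free entries below the path). Your Schubert-cell argument, including the verification that box $(k,c)$ lies below $\pi$ exactly when $b_c<a_k$, is precisely a fleshed-out version of that correspondence.
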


The above lemma shows that $\sum_{\pi \in P(n-r,r)}c(\pi)={n\brack r}$. The proof of Lemma \ref{resilience_lemma1} associates the reduced echelon form of a subspace to a path $\pi$ and a filling for that path. For example, if a $3$-dimensional vector subspace of $\mathbb{F}_q^{7}$ has the following reduced echelon form
\[
    \left(
      \begin{array}{ccccccc}
        a & 1 & 0 & 0 & 0 & 0 & 0 \\
        b & 0 & 1 & 0 & 0 & 0 & 0 \\
        c & 0 & 0 & d & 1 & 0 & 0 \\
      \end{array}
    \right)
\]
then the path and filling corresponding to this vector subspace is,
\[
\begin{ytableau}
a &    &   & \\
b &    &   & \\
c & d  &   &
\end{ytableau}
\quad \mbox{ with }\pi=ESSESEE.
\]
Note that here the steps where $\pi$ makes a SOUTH move correspond to the columns which contain a leading one in the reduced echelon form of the 3-dimensional subspace. For any $r$-subspace $X$ of $V$ we will denote by $\pi(X)$ the path corresponding to $X$.

\begin{definition} (Brandt et al. \cite{n9})
Suppose that $v \in M_q^r$, and write
\[
v = \sum_{ X \in {V \brack r} } c_X X, \quad \mbox{ where } c_X \in K.
\]
\begin{enumerate}
	\item For each path $\pi\in P(n-r,r)$, let 
		\[
			v(\pi)=\sum_{X: \pi(X)=\pi} c_X X.
		\]
	\item If $v \neq 0$, then let $\mbox{greatest}(v)$ denote the greatest\footnote{Greatest with respect to the reverse lexicographic order imposed on $P(n-r,r)$} path $\pi \in P(n-r,r)$ such that $v(\pi) \neq 0$.
	\item If $v \neq 0$, then let $\mbox{top}(v)=v({\rm greatest}(v))$.
	\item If $U$ is a subspace of $M_q^r$ and $\pi \in P(n-r,r)$, then let
	\[
		U(\pi)=\{u(\pi): 0 \neq u \in U \mbox{ and } {\rm greatest}(u)=\pi \} \cup \{0\}.
	\]
\end{enumerate}
\end{definition}

A couple of remarks are in order. First note that for any $\pi\in P(n-r,r)$, we have $M_q^r(\pi)=\{\sum_{X: \pi(X)=\pi}c_X X\mid c_X\in K\}$. Secondly, we have
$$M_q^r=\bigoplus_{\pi\in P(n-r,r)} M_q^r(\pi)$$

Let $\theta$ be an additive character of $\mathbb{F}_q$ \cite{Serre}. Suppose that $X$ and $L$ are $r$-dimensional subspaces of $V$ such that $\pi(X)=\pi(L)$. Let $\chi_L$ be the linear character on $M_q^r$ defined by
\[
		\chi_L(X)= \prod_{i=1}^r \prod_{j=1}^{n-r} \theta(l_{i,j}x_{i,j})
\]
where $l_{i,j}$ and $x_{i,j}$ denote the $(i,j)$-entries in the filling corresponding to $L$ and $X$, respectively (here we are assuming that the boxes above the path are filled with zeros).  Using the character $\chi_L$ we define the following element of $M_q^r$
\[
e_L=\sum_{X: \pi(X) = \pi(L) } \chi_{L}(-X)X
\]
for every $L \in  {V \brack r}$. Furthermore, the orthogonality relations for linear characters imply that the sets
\[
\left\{ e_L : L \in {V \brack r} \right\} \quad \mbox{ and } \quad \left\{ e_L : L \in {V \brack r} \mbox{ with }\pi(L)=\pi \right\}
\]
form a basis of $M_q^r$ and $M_q^r(\pi)$, respectively.

\begin{definition}\label{M_submodule_def1} (Brandt et al. \cite{n9})
Let $\pi \in P(n-r,r)$ be a path connecting the top left with the bottom right corner of an array of boxes of size $r$ by $n-r$. Label the corners of the array by ordered pairs $(i,j)$ with $i=1, \ldots, r+1$ and $j=1, \ldots, n-r+1$. For every corner $(i,j)$, we define $r(i,j)=j-i$. Let $X$ be an $r$-dimensional subspace of $V$ such that $\pi(X)=\pi$. We say that $X$ is {\it good} if its associated filling of the boxes to the south of $\pi$ with elements of $\mathbb{F}_q$ is {\it good}: for each corner $(i,j)$ through which the path $\pi$ passes, the matrix with bottom left and top right corners having coordinates $(r+1,1)$ and $(i,j)$, respectively, has rank at most $r(i,j)$. If $X$ is not good then we say it is {\it bad}.
\end{definition}

Note that by Definition \ref{M_submodule_def1} if a path $\pi \in P(n-r,r)$ crosses the main diagonal of the array of boxes (that is, the $r$-subset corresponding to $\pi$ does not belong to $S(r)$) then there is no good $r$-dimensional subspace $X$ with $\pi(X)=\pi$. The reason is simple: If $\pi\in P(n-r,r)$ crosses the main diagonal, then there is a corner $(i,j)$, with $i>j$, through which $\pi$ passes; for that corner, we have $r(i,j)=j-i<0$; hence there is no good filling below the path $\pi$. It follows that if $L$ is a good $r$-dimensional subspace of $V$ then $\pi(L) \in S(r)$. The next theorem gives a ``standard basis" for the Specht module $S^{(n-r,r)}$.

\begin{theorem}\label{M_submodule_base_teo} {\em  (Brandt et al. \cite{n9})}
For each good $r$-dimensional subspace $L$ of $V$ there exists a vector $z_L \in M_q^r$ with $top(z_L)=e_L$ such that $z_L$, with $L$ running through the set of good $r$-dimensional subspaces of $V$,  form a basis of $S^{(n-r,r)}$.
\end{theorem}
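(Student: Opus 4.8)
By James's dimension formula $\dim_K S^{(n-r,r)}={n\brack r}-{n\brack r-1}$, so the plan is to produce, for each good $r$-dimensional subspace $L$, a vector $z_L\in S^{(n-r,r)}$ with $\mathrm{top}(z_L)=e_L$, and then to verify the two essentially formal facts: (i) any such family $\{z_L\}$ is automatically linearly independent, and (ii) the number of good $r$-dimensional subspaces of $V$ equals ${n\brack r}-{n\brack r-1}$. Granting (i) and (ii), the $z_L$ are $\dim_K S^{(n-r,r)}$ linearly independent vectors lying in $S^{(n-r,r)}$, hence a basis.

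\textbf{The formal facts.} Fact (i) is a triangularity argument using only $M_q^r=\bigoplus_{\pi}M_q^r(\pi)$, the reverse lexicographic order on $P(n-r,r)$, and the fact that $\{e_L:\pi(L)=\pi\}$ is a basis of $M_q^r(\pi)$. Since $e_L\in M_q^r(\pi(L))$, the hypothesis $\mathrm{top}(z_L)=e_L$ forces $\mathrm{greatest}(z_L)=\pi(L)$ and $z_L(\pi(L))=e_L$, so $z_L$ has zero component in $M_q^r(\pi)$ for every $\pi\succ\pi(L)$. If $\sum_L c_Lz_L=0$ with some $c_L\neq0$, projecting onto $M_q^r(\pi^{\ast})$ with $\pi^{\ast}$ the largest path among $\{\pi(L):c_L\neq0\}$ gives $\sum_{\pi(L)=\pi^{\ast}}c_Le_L=0$, whence all those $c_L$ vanish, a contradiction. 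For Fact (ii), a path carries a good filling only if it lies in $S(r)$, and for $\pi\in S(r)$ the number of good fillings is a product over the corners of $\pi$ of factors of the shape $q^{a}$ or $q^{b}-q^{c}$ dictated by the rank bounds in Definition \ref{M_submodule_def1}; summing over $\pi\in S(r)$ produces a $q$-analogue of the ballot numbers, which one checks equals ${n\brack r}-{n\brack r-1}$ either directly as in \cite{n9} or by induction on $n$, splitting paths by their last step and using the two $q$-Pascal relations for ${n\brack r}$.

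\textbf{Constructing $z_L$ — the main obstacle.} For a good $L$ I would define $z_L$ by an explicit alternating sum, the $q$-deformation of the classical two-row polytabloid $\prod_{k=1}^{r}(1-s_k)\{t\}$: starting from $e_L$ and applying, for each of the $r$ south-steps of $\pi(L)$, an operator $1-\tau_k$ with $\tau_k$ built from the root subgroup attached to that step. Two things then need proof. First, $z_L\in S^{(n-r,r)}$: since $\mathrm{Hom}_{GL(n,q)}(M_q^r,M_q^j)$ is spanned by the generalized incidence maps $X\mapsto\sum_{\dim(X\cap Y)=d}Y$, it suffices to show each of these annihilates $z_L$, and the alternating structure of $z_L$ turns every such check into a telescoping (Garnir-type) identity. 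Second, and this is the crux, $\mathrm{top}(z_L)=e_L$: one must show that expanding $\prod_{k}(1-\tau_k)e_L$ in the basis $\{e_{L'}\}$ produces only $L'$ with $\pi(L')\preceq\pi(L)$ while leaving the coefficient of $e_L$ equal to $1$ — and it is precisely here that ``$L$ good'' is used (for bad $L$ either the $e_L$-term cancels or a reverse-lex-larger path is created). I expect this matching between the corner rank conditions of Definition \ref{M_submodule_def1} and survival of the leading term to be the hard part; my plan would be induction on $r$, peeling off the last south-step of $\pi(L)$ to reduce to an $(r-1)\times(n-r-1)$ array and carefully tracking how the corner rank bounds transform. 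An alternative route that avoids the explicit formula is to prove $\dim_K S^{(n-r,r)}(\pi)\le\#\{L\text{ good}:\pi(L)=\pi\}$ directly, by intersecting kernels of the incidence maps restricted to each graded piece $M_q^r(\pi)$; together with Fact (ii) and the existence of \emph{some} vector with the right top, this forces equality and spanning. Either way, the technical heart is the interaction between the homomorphisms defining $S^{(n-r,r)}$ and the path-and-filling combinatorics.
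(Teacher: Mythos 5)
First, note that the paper does not prove this theorem: it is imported verbatim from Brandt, Dipper, James and Lyle \cite{n9}, so there is no in-paper argument to compare yours against. Judged on its own terms, your write-up correctly identifies the architecture of a proof, and your two ``formal facts'' are handled adequately: the triangularity argument for linear independence is sound (since $\mathrm{top}(z_L)=e_L$ forces $z_L(\pi)=0$ for all $\pi\succ\pi(L)$, projecting a vanishing linear combination onto the graded piece $M_q^r(\pi^{\ast})$ of the largest occurring path reduces to the linear independence of $\{e_L:\pi(L)=\pi^{\ast}\}$), and the reduction of the theorem to (i) the count of good subspaces and (ii) the existence of the $z_L$ is correct.

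However, there is a genuine gap: both substantive inputs are asserted rather than proved. The existence, for each good $L$, of a vector $z_L\in S^{(n-r,r)}$ with $\mathrm{top}(z_L)=e_L$ is the entire content of the theorem, and your treatment of it is explicitly a plan (``I expect this\ldots to be the hard part; my plan would be induction on $r$''). In particular, you never verify that the proposed operators $1-\tau_k$ produce an element of every $\ker\phi$ for $\phi\in\mathrm{Hom}_{GL(n,q)}(M_q^r,M_q^j)$ --- and these homomorphism spaces are larger than in the symmetric-group case, being spanned by the maps $X\mapsto\sum_{\dim(X\cap Y)=d}Y$ for varying $d$, so the ``telescoping Garnir identity'' must be checked for each $d$ --- nor do you verify the leading-term survival, which is exactly where goodness of $L$ enters. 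Likewise the identity asserting that the number of good $r$-subspaces equals ${n \brack r}-{n \brack r-1}$ is the ``rank polynomial'' computation that occupies a substantial part of \cite{n9}; deferring it to ``either directly as in \cite{n9} or by induction'' leaves it unestablished. What you have is a correct skeleton with the load-bearing steps missing.
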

As was remarked earlier, every path $\pi \in P(n-r,r)$ that does not cross the main diagonal corresponds to a unique $r$-subset in $S(r)$. Thus, by abuse of notation we will denote also by $S(r)$ the set of paths that do not cross the main diagonal. 
Since the elements of $S(r)$ are in one-to-one correspondence with the standard tableaux of shape $(n-r,r)$, it follows that Theorem \ref{M_submodule_base_teo} provides a basis of $S^{(n-r,r)}$ which is indexed by the standard tableaux of shape $(n-r,r)$; that is  the reason why the basis in Theorem~\ref{M_submodule_base_teo} is called a standard basis. 

To prove Theorem \ref{r_teo3} we will need to introduce another submodule of $M_q^r$. For $0\leq j\leq r$, define the linear transformation $\varphi_{j,r} : M^j_q \rightarrow M^r_q$ as follows. For any $j$-dimensional subspace $X$ of $V$, define
\[
\varphi_{j,r}(X) = \sum_{X \subseteq R} R,
\]
where the sum runs over all the $r$-dimensional subspaces containing $X$; the definition of $\varphi_{j,r}$ is then extended to all elements of $M_q^j$ by linearity. We remark that $\varphi_{j,r}$ is not only a linear map, but also a $GL(n,q)$-module homomorphism from $M_q^j$ to $M_q^r$ since for any $g \in GL(n,q)$ we have $g \cdot \varphi_{j,r} = \varphi_{j,r} \cdot g$. To simplify notation, for any $j$-dimensional subspace $X$ of $V$, with $j \leq r$, we denote by $\langle X \rangle_r$ the image of $X$ under $\varphi_{j,r}$.   Note that the subspace inclusion matrix $W_{r,j}(q)$ is the matrix of $\varphi_{j,r}$ with respect to the canonical bases of $M_q^j$ and $M_q^r$. It follows from the results in Frumkin and Yakir \cite{n10} that
\begin{equation}\label{vs_ecu3}
\dim_{K}(\mbox{im}(\varphi_{j,r})) =  \sum_{i\in Y } \left({n \brack i} - {n \brack i-1}\right),
\end{equation}
where $Y=\{i: 0 \leq i \leq j, {r-i \brack j-i} \neq_K 0 \}$. Consider the following subspace of $M_q^r$,
\[
	U_{r-1}=  \varphi_{0,r}(M_q^0) + \varphi_{1,r}(M_q^1) + \cdots + \varphi_{r-1,r}(M_q^{r-1}).
\]
That is $U_{r-1}$ is the column space of 
$$
\left[ W_{r,0}(q) \mid W_{r,1}(q) \mid \cdots \mid W_{r,r-1}(q)  \right].
$$
Note that $U_{r-1}$ is a $GL(n,q)$-submodule of $M_q^r$. This module was studied by Frumkin and Yakir \cite{n10}, in which it was shown that the dimension over $K$ of $U_{r-1}$ is ${n \brack r-1}$.

\subsection{Proof of Theorem \ref{r_teo3}}

In this subsection we will give the proof of Theorem \ref{r_teo3}. Our approach will be similar to the one used in the proof of Theorem \ref{r_teo2}. However, since we do not have a $q$-analogue of the Bier basis of $M_q^r$ we will use the results from representation theory that were introduced in Section 3.1.

For $\pi \in P(n-r,r)$, define the {\it leading term} of $\pi$ to be the number of $E$ moves before the first $S$ move. We define
$$S(r)^{-}=\{\pi\in P(n-r,r): {\rm the}\; {\rm  leading}\; {\rm term}\; {\rm of}\; \pi<r\},$$
and
$$S(r)^{+}=\{\pi\in P(n-r,r): {\rm the}\; {\rm  leading}\; {\rm term}\; {\rm of}\; \pi\geq r\}.$$
From definition we have 
\[
	S(r)= S(r)^{-}\; \dot\cup\; S(r)^{+}
\]
Also, if $\pi\in S(r)^{+}$, every filling of $\pi$ is good since for any corner $(i,j)$ through which $\pi$ passes, $r(i,j)$ is automatically greater than or equal to the rank of the matrix with bottom left and top right corners having coordinates $(r+1,1)$ and $(i,j)$. As a preparation, we first prove the following lemma.

\begin{lemma}\label{vs_lem_tech1}
Let $K$ be a field  of characteristic coprime to $q=p^t$ and containing a primitive $p^{\rm th}$ root of unity. With notation as above, we have
\[
	U_{r-1} \cap \bigoplus_{\pi \in S(r)^{+}} M_q^r(\pi)= \{0\}.
\]
\end{lemma}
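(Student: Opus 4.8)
The plan is to exploit the two structural ingredients already set up: the Submodule Theorem (Theorem~\ref{M_submodule_teo1}) and the standard basis $\{z_L\}$ of $S^{(n-r,r)}$ (Theorem~\ref{M_submodule_base_teo}), together with the fact that $\dim_K U_{r-1}={n\brack r-1}$. First I would observe that $U_{r-1}$ is a $GL(n,q)$-submodule of $M_q^r$ whose dimension is strictly less than $\dim_K M_q^r={n\brack r}$ but which does \emph{not} contain $S^{(n-r,r)}$ (since $\dim_K S^{(n-r,r)}={n\brack r}-{n\brack r-1}$ and ${n\brack r-1}+\bigl({n\brack r}-{n\brack r-1}\bigr)={n\brack r}>\dim_K U_{r-1}$ unless $U_{r-1}$ meets $S^{(n-r,r)}$ trivially; more precisely, by James's result $M_q^r=S^{(n-r,r)}\oplus (S^{(n-r,r)})^{\perp}$ up to the semisimple situation, and $U_{r-1}=(S^{(n-r,r)})^{\perp}$). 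By the Submodule Theorem, since $S^{(n-r,r)}\not\subseteq U_{r-1}$, we must have $U_{r-1}\subseteq (S^{(n-r,r)})^{\perp}$; comparing dimensions (both equal ${n\brack r-1}$), we conclude $U_{r-1}=(S^{(n-r,r)})^{\perp}$. So the lemma reduces to showing
\[
	(S^{(n-r,r)})^{\perp}\cap\bigoplus_{\pi\in S(r)^{+}}M_q^r(\pi)=\{0\},
\]
equivalently that the orthogonal projection of $\bigoplus_{\pi\in S(r)^{+}}M_q^r(\pi)$ onto $S^{(n-r,r)}$ is injective, i.e.\ that $S^{(n-r,r)}$ together with $\bigoplus_{\pi\notin S(r)^{+}}M_q^r(\pi)$ spans $M_q^r$, or — cleanest — that no nonzero vector of $\bigoplus_{\pi\in S(r)^{+}}M_q^r(\pi)$ is orthogonal to all of $S^{(n-r,r)}$.

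Next I would bring in the standard basis. Suppose $0\neq v\in(S^{(n-r,r)})^{\perp}$ lies in $\bigoplus_{\pi\in S(r)^{+}}M_q^r(\pi)$, and let $\pi_0=\mathrm{greatest}(v)$; then $\pi_0\in S(r)^{+}$, so every filling of $\pi_0$ is good (as noted just before the lemma), meaning that for \emph{every} $L$ with $\pi(L)=\pi_0$ the basis vector $z_L\in S^{(n-r,r)}$ exists with $\mathrm{top}(z_L)=e_L$ and $\mathrm{greatest}(z_L)=\pi_0$. The key point is now a ``leading term'' or triangularity argument: the pairing $\langle v, z_L\rangle$ is controlled by the top components, because $\mathrm{greatest}(z_L)=\pi_0=\mathrm{greatest}(v)$ is maximal, and $M_q^r=\bigoplus_\pi M_q^r(\pi)$ is an orthogonal decomposition, so
\[
	\langle v, z_L\rangle = \langle \mathrm{top}(v), e_L\rangle
\]
for each good $L$ with $\pi(L)=\pi_0$. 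Since $\{e_L : \pi(L)=\pi_0\}$ is a basis of $M_q^r(\pi_0)$ and $\mathrm{top}(v)=v(\pi_0)$ is a nonzero element of $M_q^r(\pi_0)$, there is some such $L$ with $\langle \mathrm{top}(v),e_L\rangle\neq 0$, hence $\langle v,z_L\rangle\neq 0$, contradicting $v\in(S^{(n-r,r)})^{\perp}$. (One has to be slightly careful: $z_L=e_L+(\text{lower }\pi\text{ terms})$, and $v$ may also have lower-$\pi$ terms, but those contribute $\langle(\text{lower part of }v),(\text{lower part of }z_L)\rangle$ which need not vanish; the resolution is that the $z_L$ for $\pi(L)=\pi_0$ are simultaneously triangular, so one works with the top layer first and argues that $v(\pi_0)\perp e_L$ for \emph{all} such $L$ is impossible — exactly as above, since the pairing on layer $\pi_0$ between $v(\pi_0)$ and the $e_L$'s is literally the pairing that defines a nonzero functional.)

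I expect the main obstacle to be making the ``top-layer pairing'' step fully rigorous: one needs that for $v\in(S^{(n-r,r)})^{\perp}$ with $\mathrm{greatest}(v)=\pi_0\in S(r)^{+}$, orthogonality to all $z_L$ with $\pi(L)=\pi_0$ already forces $v(\pi_0)=0$. This is true because the orthogonal decomposition $M_q^r=\bigoplus_\pi M_q^r(\pi)$ together with $\mathrm{greatest}(z_L)=\pi_0$ gives $\langle v,z_L\rangle=\langle v(\pi_0), z_L(\pi_0)\rangle+\langle v, z_L-z_L(\pi_0)\rangle$, and $z_L(\pi_0)=\mathrm{top}(z_L)=e_L$; the second term pairs $v$ against something supported on paths strictly below $\pi_0$, which is still a nonzero contribution in general — so one instead induces on $\pi_0$ downward, or better, observes that the map $w\mapsto(\langle w,z_L\rangle)_{\pi(L)=\pi_0}$ restricted to $M_q^r(\pi_0)$ equals $w\mapsto(\langle w,e_L\rangle)$, which is an isomorphism $M_q^r(\pi_0)\to K^{c(\pi_0)}$ by the character orthogonality relations, while on $\bigoplus_{\pi<\pi_0}M_q^r(\pi)$ all these functionals vanish (since $\mathrm{greatest}(z_L)=\pi_0$ means $z_L$ has no component on paths $>\pi_0$, but it \emph{does} have components on paths $<\pi_0$ — so this needs the $z_L$'s restricted to the $\pi_0$-layer, which are exactly the $e_L$'s). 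Packaging this as: the composite $(S(r)^{+}\text{-part})\hookrightarrow M_q^r\twoheadrightarrow M_q^r/(S^{(n-r,r)})^{\perp}\cong S^{(n-r,r)}$ sends $v$ to something whose $\pi_0$-top-component is $\sum_L\langle\mathrm{top}(v),e_L\rangle z_L\neq 0$ in the standard-basis expansion, hence $v\notin(S^{(n-r,r)})^{\perp}$ — should close the argument cleanly, and this is the step I would write out in full detail.
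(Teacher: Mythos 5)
Your overall strategy --- use the Submodule Theorem to force $U_{r-1}\subseteq (S^{(n-r,r)})^{\perp}$, then pair a hypothetical nonzero $v\in U_{r-1}\cap\bigoplus_{\pi\in S(r)^{+}}M_q^r(\pi)$ against the standard basis vectors $z_L$ (which exist for \emph{every} $L$ with $\pi(L)\in S(r)^{+}$, since all such $L$ are good) --- is the same as the paper's; the paper merely packages the pairing step as a column-echelon-form computation. But the step you yourself single out as the crux is genuinely broken as stated, and none of your attempted repairs closes it. With $\pi_0=\mathrm{greatest}(v)$, the identity $\langle v,z_L\rangle=\langle \mathrm{top}(v),e_L\rangle$ fails: both $v$ and $z_L$ have components on paths strictly below $\pi_0$, and since $M_q^r=\bigoplus_\pi M_q^r(\pi)$ is an orthogonal decomposition these contribute $\sum_{\pi<\pi_0}\langle v(\pi),z_L(\pi)\rangle$, over which you have no control. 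Your first fix (that the functionals $\langle\cdot,z_L\rangle$ vanish on $\bigoplus_{\pi<\pi_0}M_q^r(\pi)$) is withdrawn in your own parenthetical, and the final ``packaging'' asserts without justification that the image of $v$ in $M_q^r/(S^{(n-r,r)})^{\perp}$ has standard-basis coefficients $\langle\mathrm{top}(v),e_L\rangle$; those coefficients are in fact $\langle v,z_L\rangle$, which is exactly the quantity you cannot compute this way.

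The repair is small but essential: run the triangularity in the opposite direction. Let $\pi_0$ be the \emph{least} path in the support of $v$ rather than the greatest. Then $v$ has no component on any path $<\pi_0$, while each $z_L$ with $\pi(L)=\pi_0$ has no component on any path $>\pi_0$ (because $\mathrm{greatest}(z_L)=\pi_0$), so orthogonality of the layers gives $\langle v,z_L\rangle=\langle v(\pi_0),e_L\rangle$ with no error term. Since $\pi_0\in S(r)^{+}$, the set $\{e_L:\pi(L)=\pi_0\}$ is a full basis of $M_q^r(\pi_0)$, so $v(\pi_0)\neq 0$ cannot be orthogonal to all of them, contradicting $v\in U_{r-1}\subseteq (S^{(n-r,r)})^{\perp}$. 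This is precisely what the paper's argument does: its echelon form places the pivots of $U_{r-1}$ at the \emph{bottom} of each column, i.e., it pairs a vector vanishing on all rows below a given row $L$ against a Specht vector $w_L$ supported only at $L$ and below. (Your side remark that $U_{r-1}=(S^{(n-r,r)})^{\perp}$ by dimension count is correct but unnecessary --- only the containment is used --- and the aside ``$M_q^r=S^{(n-r,r)}\oplus(S^{(n-r,r)})^{\perp}$'' is false outside the semisimple case, though you do not rely on it.)
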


\begin{proof}
We will use the inner product $\langle \cdot, \cdot \rangle$ defined on $M_q^r$ given in Theorem~\ref{M_submodule_teo1}. Since ${\rm dim}(S^{(n-r,r)}) > {\rm dim}(U_{r-1})$, we see by the Submodule Theorem that $U_{r-1}$ is contained in $(S^{(n-r,r)})^{\perp}$.  Thus, for any $z \in S^{(n-r,r)}$ and any $v \in U_{r-1}$ we have $\langle z, v\rangle=0$.

As we remarked above any $r$-dimensional subspace $L$ of $V$ with $\pi(L) \in S(r)^{+}$ is good. Therefore, if $\pi \in S(r)^{+}$ then the vectors in the set $\{ e_L :  L \mbox{ good and }\pi(L)=\pi\}$ form a basis of $M_q^r(\pi)$. Combining this fact with Theorem \ref{M_submodule_base_teo}, we conclude that the Specht module $S^{(n-r,r)}$ contains a vector $w_L$ such that $top(w_L)=L$ for each $L$ with $\pi(L) \in S(r)^{+}$.

Given any vector $v \in M_q^r$ we can use the canonical basis of  $M_q^r$ to represent $v$ as a column vector (that is, we index the coordinates of the column vector by $r$-subspaces of $V$). We arrange the canonical basis with respect to the reverse lexicographic order. Therefore, on the top we have the subspaces associated to the paths in $S(r)^{+}$, then the subspaces whose associated paths are in $S(r)^{-}$, and finally the ones associated to paths in $P(n-r,r) \setminus S(r)$.   

Now, given an arbitrary basis of $U_{r-1}$, we consider the basis elements represented as column vectors with respect to the canonical basis.  Applying column operations to the basis vectors we can get a new basis of $U_{r-1}$ in reduced echelon form such that the leading ones appear from left to right and from the bottom to the top. 

We claim that no leading ones of this new basis appear on a row indexed by a subspace $L$ with $\pi(L) \in S(r)^{+}$. Note that this is enough to prove the conclusion of the lemma.

To prove our claim we proceed by contradiction. Suppose that after column operations one of the basis vectors $v'$ of $U_{r-1}$ has a leading one in a row indexed by a subspace $L$ with $\pi(L) \in S(r)^{+}$. Then,  $\langle v', w_L \rangle=1$ which is a contradiction because $U_{r-1} \subseteq (S^{(n-r,r)})^{\perp}$.
\end{proof}

Now we prove a vector space analogue of Lemma \ref{r_lemma2}.  To state the result we introduce some notation. For any $g \in GL(n,q)$ and any family $\mathcal{F}$ of $r$-subspaces of $V$ we denote by $g(\mathcal{F})$ the family of $r$-subspaces $\{ g(X): X \in \mathcal{F} \}$. Furthermore, consider the following set of $r$-dimensional vector subspaces of $V$:
\[
S(r)^{+}_q = \left\{ X \in {V \brack r} : \pi(X) \in S(r)^{+} \right\}
\]
That is, $S(r)^{+}_q$ is the set of $r$-subspaces of $V$ whose associated paths are in $S(r)^{+}$.

\begin{lemma}\label{vs_resilience}
Suppose $0 \leq s < r \leq n/2$. Let $\mathcal{F}$ be a family of $r$-dimensional subspaces of $V$ and $K$ a field with $\mbox{char}(K)\neq p$. If there exists $g \in GL(n,q)$ such that $g(\mathcal{F}^c) \subseteq S(r)_q^{+}$ then
\[
\rank_{K}(W_{r,s}^{\mathcal{F}}(q)) = \rank_{K}( W_{r,s}(q)).
\] 
\end{lemma}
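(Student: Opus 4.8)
The plan is to mimic the set-case argument of Lemma~\ref{r_lemma2}, replacing the Bier basis of $M^r$ by the decomposition $M_q^r = \bigoplus_{\pi\in P(n-r,r)} M_q^r(\pi)$ together with the structural information supplied by Lemma~\ref{vs_lem_tech1}. First, by the same conjugation trick used at the end of the proof of Lemma~\ref{r_lemma2}, it suffices to treat the case $\mathcal{F}^c\subseteq S(r)_q^{+}$: if $g\in GL(n,q)$ satisfies $g(\mathcal{F}^c)\subseteq S(r)_q^{+}$, define the invertible $GL(n,q)$-intertwiners $\Phi_r^g\colon M_q^r\to M_q^r$, $R\mapsto g(R)$ and $\Phi_s^g\colon M_q^s\to M_q^s$, $S\mapsto g(S)$; then the modified homomorphism $\varphi_{s,r}^{\mathcal{F}}$ factors as $(\Phi_r^g)^{-1}\circ\varphi_{s,r}^{g(\mathcal{F})}\circ\Phi_s^g$, so $\dim_K(\operatorname{im}\varphi_{s,r}^{\mathcal{F}})=\dim_K(\operatorname{im}\varphi_{s,r}^{g(\mathcal{F})})$ and we may replace $\mathcal{F}$ by $g(\mathcal{F})$.

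Next, assume $\mathcal{F}^c\subseteq S(r)_q^{+}$. Define $\varphi_{s,r}^{\mathcal{F}}\colon M_q^s\to M_q^r$ by $\varphi_{s,r}^{\mathcal{F}}(S)=\sum_{S\subseteq R} R-\sum_{T\in\mathcal{F}^c,\,S\subseteq T} T$, so that $\dim_K(\operatorname{im}\varphi_{s,r}^{\mathcal{F}})=\rank_K(W_{r,s}^{\mathcal{F}}(q))$ and $\varphi_{s,r}^{\mathcal{F}}(S)=\langle S\rangle_r-\sum_{T\in\mathcal{F}^c,\,S\subseteq T} T$. By Frumkin--Yakir (formula~\eqref{vs_ecu3}), $\rank_K(W_{r,s}(q))=\dim_K(\operatorname{im}\varphi_{s,r})=\sum_{i\in Y}\bigl({n\brack i}-{n\brack i-1}\bigr)$ with $Y=\{i:0\le i\le s,\ {r-i\brack s-i}\neq_K 0\}$, and since $W_{r,s}^{\mathcal{F}}(q)$ is a row-submatrix of $W_{r,s}(q)$ we have the trivial bound $\rank_K(W_{r,s}^{\mathcal{F}}(q))\le\rank_K(W_{r,s}(q))$. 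So it is enough to produce $\sum_{i\in Y}\bigl({n\brack i}-{n\brack i-1}\bigr)$ linearly independent vectors in $\operatorname{im}\varphi_{s,r}^{\mathcal{F}}$. Here is where I deviate from the set case: rather than evaluating $\varphi_{s,r}^{\mathcal{F}}$ on a Bier-type basis of $M_q^s$, I use that $\operatorname{im}\varphi_{s,r}=U_s$ (the column space of $[W_{r,0}(q)\mid\cdots\mid W_{r,s}(q)]$) has dimension exactly $\sum_{i\in Y}(\cdots)$ by~\eqref{vs_ecu3}, and that $U_s\subseteq U_{r-1}$ since $s<r$. The correction term $\sum_{T\in\mathcal{F}^c,\,S\subseteq T}T$ lands in $\operatorname{span}\{T:T\in\mathcal{F}^c\}\subseteq\bigoplus_{\pi\in S(r)^{+}}M_q^r(\pi)$, because every $T\in\mathcal{F}^c$ has $\pi(T)\in S(r)^{+}$. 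By Lemma~\ref{vs_lem_tech1}, $U_{r-1}\cap\bigoplus_{\pi\in S(r)^{+}}M_q^r(\pi)=\{0\}$, hence $U_s\cap\operatorname{span}\{T:T\in\mathcal{F}^c\}=\{0\}$.

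Now pick vectors $v_1,\dots,v_m\in M_q^s$ (with $m=\sum_{i\in Y}(\cdots)$) whose images $\varphi_{s,r}(v_1),\dots,\varphi_{s,r}(v_m)$ form a basis of $U_s=\operatorname{im}\varphi_{s,r}$; write $\varphi_{s,r}^{\mathcal{F}}(v_k)=\varphi_{s,r}(v_k)+z_k$ with $z_k=-\sum_{T\in\mathcal{F}^c,\,T\supseteq(\text{support of }v_k)}(\cdots)\in\operatorname{span}\{T:T\in\mathcal{F}^c\}$; more precisely $z_k$ is the $\mathcal{F}^c$-correction obtained by extending $S\mapsto -\sum_{T\in\mathcal{F}^c,S\subseteq T}T$ linearly to $v_k$. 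Since $\operatorname{span}\{\varphi_{s,r}(v_1),\dots,\varphi_{s,r}(v_m)\}=U_s$ meets $\operatorname{span}\{z_1,\dots,z_m\}\subseteq\operatorname{span}\{T:T\in\mathcal{F}^c\}$ only in $\{0\}$, Lemma~\ref{extra_lemma} gives that $\varphi_{s,r}^{\mathcal{F}}(v_1),\dots,\varphi_{s,r}^{\mathcal{F}}(v_m)$ are linearly independent, whence $\rank_K(W_{r,s}^{\mathcal{F}}(q))\ge m=\rank_K(W_{r,s}(q))$. Combined with the trivial upper bound this proves the lemma. The main obstacle is the bookkeeping in the last step: one must be sure the correction vectors $z_k$ genuinely lie in $\operatorname{span}\{T:T\in\mathcal{F}^c\}$ (which they do, since each is an $\mathbb{F}_q$- and then $K$-linear combination of the finitely many $T\in\mathcal{F}^c$) and that Lemma~\ref{extra_lemma} applies verbatim with $u_k=\varphi_{s,r}(v_k)$, $z_k$ as above; the genuinely substantive input — that $U_{r-1}$ avoids the $S(r)^{+}$-part of $M_q^r$ — has already been isolated in Lemma~\ref{vs_lem_tech1} via the Submodule Theorem and the standard basis of $S^{(n-r,r)}$.
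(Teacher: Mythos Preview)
Your argument is correct and follows essentially the same route as the paper's: reduce by conjugation to $\mathcal{F}^c\subseteq S(r)_q^{+}$, write $\varphi_{s,r}^{\mathcal{F}}=\varphi_{s,r}+(\text{correction supported on }\mathcal{F}^c)$, observe that $\operatorname{im}\varphi_{s,r}\subseteq U_{r-1}$ while the correction lies in $\bigoplus_{\pi\in S(r)^{+}}M_q^r(\pi)$, and apply Lemma~\ref{vs_lem_tech1} together with Lemma~\ref{extra_lemma}; bypassing the paper's explicit $B_j$-basis of $M_q^s$ in favor of arbitrary preimages of a basis of $\operatorname{im}\varphi_{s,r}$ is, if anything, a mild streamlining. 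Two cosmetic points: the equality $\operatorname{im}\varphi_{s,r}=U_s$ need not hold in positive characteristic (only the inclusion $\operatorname{im}\varphi_{s,r}\subseteq U_s\subseteq U_{r-1}$ is guaranteed, and that is all you actually use), and Lemma~\ref{vs_lem_tech1} is stated under the hypothesis that $K$ contains a primitive $p$-th root of unity, so you should first pass to a suitable extension of $K$, as the paper does at the outset of its proof.
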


\begin{proof}
Without loss of generality we may assume that $K$ contains a primitive $p^{\rm th}$ root of unity. Indeed, if $K$ does not contain a primitive $p$-th root of unity then we can extend $K$ to a larger field and this does not change the rank of the matrices $W_{r,s}(q)$ or $W_{r,s}(q)^{\mathcal{F}}$.

First, assume  that  $\mathcal{F}^c \subseteq S(r)_q^{+}$. Consider the following subspaces of $M_q^s$,
\begin{equation}\label{vs_ecu1}
	W_j= \varphi_{0,s}(M_q^0) + \varphi_{1,s}(M_q^1) + \cdots + \varphi_{j,s}(M_q^{j}).
\end{equation}
for $j=0,1,\ldots ,s$. It is clear that 
\begin{equation}\label{vs_ecu2}
	W_0 \subset W_1 \subset \cdots \subset W_s
\end{equation}
Furthermore, the dimension of $W_j$ over $K$ was shown to be ${n \brack j}$ in \cite{n10}.  Therefore, it follows from equations (\ref{vs_ecu1}) and (\ref{vs_ecu2}) that $M_q^s$ has a basis with the following property: For each $j$ from $0$ to $s$, ${n \brack j} - {n \brack j-1}$ of the elements of the basis are of the form $\langle  X \rangle_s$ with $X \in {V \brack j}$. For $j=0,1,\ldots ,s$, we denote by $B_j$ a set of $j$-dimensional subspaces of $V$ with cardinality ${n \brack j} - {n \brack j-1}$ chosen in such a way that 
\[
	\bigcup_{j=0}^s \left\{  \langle X \rangle_s: X \in B_j \right\}
\]
is a basis of $M_q^s$.

By the definition of $\varphi_{s,r}$ and straightforward computations, we have 
\begin{equation}\label{vs_ecu4}
\varphi_{s,r}( \langle X \rangle_s) = {r-j \brack s-j} \langle X \rangle_r
\end{equation}
for all $X \in B_j$ with $j$ ranging from $0$ to $s$. 

Let $Y_s=\{ j : 0 \leq j \leq s \mbox{ such that } {r-j \brack s-j} \neq_K 0 \}$ and $Z_s=\{ j : 0 \leq j \leq s \mbox{ such that } {r-j \brack s-j} =_K 0 \}$. Equations (\ref{vs_ecu3}) and (\ref{vs_ecu4}) imply that the set   $$\bigcup_{j \in Z_s} \left\{  \langle X \rangle_s: X \in B_j \right\}$$ forms a basis of the kernel of $\varphi_{s,r}$. Therefore, the set
\begin{equation}\label{vs_ecu5}
	\bigcup_{j \in Y_s} \left\{  \langle X \rangle_r: X \in B_j \right\}
\end{equation}
forms a basis for the image of $\varphi_{s,r}$; so in particular these vectors are linearly independent in $M_q^r$.

Now, we proceed in the same way as in the proof of Lemma \ref{r_lemma2}.  Consider the following linear transformation from $M^s_q$ to $M^r_q$
\[
 \varphi_{s,r}^{\mathcal{F}^c}(S)  = \sum_{S \subseteq R} R - \sum_{T \in \mathcal{F}^c,S \subseteq T} T
\]
where $R$ runs over all $r$-dimensional subspaces of $V$ containing $S$, and $T$ runs over all $r$-dimensional subspaces of $V$ containing $S$ such that $T \in \mathcal{F}^c$. It is clear from definition that $\dim_{K}(\mbox{im}\varphi_{s,r}^{\mathcal{F}^c}) = \mbox{rank}_{K} W_{r,s}^{\mathcal{F}}(q)$. Furthermore, note that for every $X \in B_j$ with $0 \leq j \leq s$ we have 
\begin{equation*}
\varphi_{s,r}^{\mathcal{F}^c}(\langle  X \rangle_s) = {r-j \brack s-j} \langle  X \rangle_r - \sum_{T \in\mathcal{F}^c, X \subseteq T } {r-j  \brack s-j} T.
\end{equation*}

Note that the vectors in
$$  \left\{ {r-j \brack s-j} \langle  X \rangle_r  :  X \in B_j \mbox{ with } 0 \leq j \leq s \right\}  $$
are linearly independent. Moreover, for every $X \in B_j$ with $0 \leq j \leq s$, the vector $\displaystyle \sum_{T \in\mathcal{F}^c, X \subseteq T } {r-j  \brack s-j} T$ is contained in $U_{r-1}$. Therefore, it follows from Lemma \ref{extra_lemma} and Lemma \ref{vs_lem_tech1}  that the vectors in
\[
	\bigcup_{j \in Y} \left\{  \varphi_{s,r}^{\mathcal{F}^c}(\langle  X \rangle_s)   : X \in B_j \right\}
\]
are linearly independent in $M_q^r$. Therefore 
\[
   \sum_{j \in Y} \left({n \brack j } - {n \brack j-1}\right) \leq \dim_{K}(\mbox{im}\varphi_{s,r}^{\mathcal{F}^c})
\]
Hence, Lemma \ref{vs_resilience} follows from the trivial upper bound $\mbox{rank}_KW_{r,s}^{\mathcal{F}}(q) \leq \mbox{rank}_KW_{r,s}(q)$ and the $q$-analogue of Wilson's rank formula for $W_{r,s}(q)$.

Now, if $\mathcal{F}^c \nsubseteq S(r)_q^{+}$, then by assumption there exists $g \in GL(n,q)$ such that $g(\mathcal{F}^c) \subseteq S(r)_q^{+}$. As in the proof of Lemma \ref{r_lemma2}, we can use $g$  to define the following invertible linear transformations,
\[
\begin{array}{ccccc}
\Phi_r^{g}: & M^r_q & \rightarrow & M^r_q\\
                          & R     & \mapsto & g(R) 			
\end{array},
\quad
\begin{array}{ccccc}
\Phi_s^{g}: & M^s_q & \rightarrow & M^s_q\\
                          & S     & \mapsto & g(S) 			
\end{array}
\]

From the above definitions, it follows that
\[
\varphi_{s,r}^{\mathcal{F}^c}= (\Phi_r^{g})^{-1} \circ \varphi_{s,r}^{g(\mathcal{F}^c)} \circ \Phi_s^{g}
\]
Hence $\dim_{K}(\mbox{im}\varphi_{s,r}^{\mathcal{F}^c}) =\dim_{K}(\mbox{im}\varphi_{s,r}^{g(\mathcal{F}^c)})$. The proof of the lemma is now complete.

\end{proof}

In the statement of the following corollary, for an $r$-dimensional subspace $X$ of $V$, we denote also by $\pi(X)$ the unique $r$-subset of $[n]$ corresponding to the path in $P(n-r,r)$ associated with $X$.

\begin{corollary}\label{vs_cor1}
Suppose that $0 \leq s < r \leq n/2$. Let $\mathcal{F}$  be a family of $r$-subspaces of $V$ satisfying that
\begin{equation}\label{resilience_ecu6}
	\left| \cup_{X \in \mathcal{F}^c } \pi(X) \right| \leq n-r
\end{equation}
Then $\rank_{K}(W_{r,s}^{\mathcal{F}}(q)) = \rank_{K}( W_{r,s}(q))$.
\end{corollary}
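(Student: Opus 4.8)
The plan is to deduce the corollary from Lemma~\ref{vs_resilience} by exhibiting a $g\in GL(n,q)$ with $g(\mathcal{F}^c)\subseteq S(r)_q^{+}$. The key observation is that an $r$-subspace $X$ lies in $S(r)_q^{+}$ precisely when its associated path $\pi(X)$ begins with at least $r$ East moves, i.e. when the reduced echelon form of $X$ has all of its $r$ leading ones in columns $r+1,r+2,\ldots,n$; equivalently, $X$ contains none of the first $r$ standard basis vectors in its ``pivot support,'' and more usefully, $X$ is disjoint from the coordinate subspace spanned by $e_1,\ldots,e_r$ in a suitable sense. The cleanest sufficient condition to record is: if $X\cap\langle e_1,\dots,e_r\rangle=\{0\}$ then $\pi(X)\in S(r)^{+}$, since then the reduced echelon pivots of $X$ must all occur among the last $n-r$ coordinates. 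So it suffices to find $g$ so that $g(X)\cap\langle e_1,\dots,e_r\rangle=\{0\}$ for every $X\in\mathcal{F}^c$ simultaneously, which is the same as asking for a complement: a change of coordinates after which every subspace in $\mathcal{F}^c$ avoids a fixed $r$-dimensional coordinate subspace.

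The next step is to translate the hypothesis \eqref{resilience_ecu6} into something that guarantees such a $g$ exists. Let $P=\bigcup_{X\in\mathcal{F}^c}\pi(X)\subseteq[n]$, so $|P|\le n-r$ by hypothesis; here $\pi(X)$ is read as the $r$-subset of pivot columns of $X$ in echelon form. Pick any $r$-subset $Q\subseteq[n]\setminus P$ (possible since $|[n]\setminus P|\ge r$). Each $X\in\mathcal{F}^c$ has all its pivots inside $P$, hence $X$ is contained in the coordinate subspace $\langle e_i:i\in[n]\setminus Q\rangle$ — wait, more carefully: since the pivots of $X$ lie in $P$ and $Q\cap P=\emptyset$, the projection of $X$ onto the coordinates indexed by $Q$ is zero on the pivot rows, and in fact $X$ lies in the span of $\{e_j:j\notin Q\}$ is not automatic, so instead I argue directly: a vector in $X$ whose pivot entries are all indexed in $P$ can still have nonzero entries in $Q$-coordinates. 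The correct route is to use that $X\cap\langle e_j:j\in Q\rangle=\{0\}$: any nonzero vector of $X$ in that coordinate subspace would, in echelon form, have its leading entry in a column of $Q$, contradicting that all pivots lie in $P$. Thus every $X\in\mathcal{F}^c$ already avoids the coordinate subspace $\langle e_j:j\in Q\rangle$. Now choose $g\in GL(n,q)$ to be the coordinate permutation sending $Q$ to $\{1,\dots,r\}$; then $g(X)\cap\langle e_1,\dots,e_r\rangle=\{0\}$ for all $X\in\mathcal{F}^c$, so $g(\mathcal{F}^c)\subseteq S(r)_q^{+}$, and Lemma~\ref{vs_resilience} applies to give the conclusion.

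I expect the main obstacle to be pinning down the equivalence between ``the path $\pi(X)$ has leading term $\ge r$'' and a coordinate-free (or at least coordinate-subspace-avoidance) statement about $X$, and making sure the echelon-form bookkeeping is exactly right — in particular that ``pivots of $X$ all lie in $P$'' really is equivalent to $\pi(X)\subseteq P$ with the convention in the paper, and that $X\cap\langle e_j:j\in Q\rangle=\{0\}$ whenever $Q$ is disjoint from the pivot set of $X$. Once that dictionary is fixed, the rest is just: $|P|\le n-r\Rightarrow$ there is room for an $r$-subset $Q$ disjoint from $P\Rightarrow$ a permutation matrix $g$ does the job. A secondary point to handle cleanly is that $S(r)^{+}\subseteq S(r)$ and hence $S(r)^{+}\ne\emptyset$ requires $n\ge 2r$, which is exactly the standing hypothesis $r\le n/2$, so there is always at least one valid target path; and one should note that the permutation $g$ may move subspaces in $\mathcal{F}$ as well, but that is harmless since Lemma~\ref{vs_resilience} only constrains the image of $\mathcal{F}^c$.
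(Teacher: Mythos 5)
Your proposal is correct and follows essentially the same route as the paper: reduce to Lemma~\ref{vs_resilience} and produce a coordinate permutation $g$ moving all pivot columns of the subspaces in $\mathcal{F}^c$ out of the first $r$ positions, which is possible exactly because $\bigl|\cup_{X\in\mathcal{F}^c}\pi(X)\bigr|\leq n-r$. Your intermediate dictionary ($\pi(X)\in S(r)^{+}$ iff $X\cap\langle e_1,\dots,e_r\rangle=\{0\}$, and a nonzero vector of $X$ supported on $Q$ forces a pivot of $X$ in $Q$) is accurate under the paper's echelon-form convention and in fact justifies more carefully the step the paper states only briefly.
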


\begin{proof}
By Lemma \ref{vs_resilience} it is enough to show that there exists $g \in GL(n,q)$ such that $g(\mathcal{F}^c) \subseteq S(r)_q^{+}$. Recall that every $r$-dimensional subspace of $V$ can be represented by a unique $r$ by $n$ matrix in reduced echelon form. The condition $\left| \bigcup_{X \in \mathcal{F}^c } \pi(X) \right| \leq n-r$ implies that there are at least $r$ columns that do not contain a leading one for any of the subspaces in $\mathcal{F}^c$. Let $i_1 < i_2 < \cdots < i_l$ be the indices of the columns corresponding to the leading ones of all subspaces in $\mathcal{F}^c$. By assumption we have that $l \leq n-r$; so there exists a permutation sending $i_l \rightarrow n, i_{l-1} \rightarrow n-1, \ldots, i_1 \rightarrow n-l+1$ where $n-l+1 > r$.

This implies that there exists a linear transformation $g\in GL(n,q)$ sending every $X \in \mathcal{F}^c$ to a subspace $g(X)$ such that none of the leading ones of the reduced echelon form of $g(X)$ appears in the first $r$ columns; hence 
$g(X) \in S(r)^{+}_q$ for every $X \in \mathcal{F}^c$. The proof of the corollary is now complete.
\end{proof}

Theorem~\ref{r_teo3} is an immediate consequence of Corollary \ref{vs_cor1} because any family of $r$-subspaces ${\mathcal F}$ of $V$ satisfying that $|\mathcal{F}^c| \leq \frac{n}{r}-1$ clearly satisfies (\ref{resilience_ecu6}).

\section{Concluding Remarks}

In this paper, we have proved two variations of Keevash's result (Theorem \ref{r_teo1}). First, we show that the rank of the subset-inclusion matrix $W_{r,s}^{\mathcal{F}}$ is resilient over any field. More precisely, if a family $\mathcal{F}$ of $r$-subsets of $[n]$ satisfies the condition that $|{\mathcal F}^c|\leq \frac{n-1}{r}$, then $\mbox{rank}_{K}(W_{r,s}^{\mathcal F}) = \mbox{rank}_{K}( W_{r,s})$ for any field $K$. Note that a less restrictive bound on $|{\mathcal F}^c|$ was obtained in \cite{n7} when $K$ is a field of characteristic zero. More precisely, if ${\rm char}(K)=0$ and $n$ is large, it was shown in \cite{n7} that $\mbox{rank}_{K}( W_{r,s}^{\mathcal{F}})=\mbox{rank}_{K}(W_{r,s})$ for all families ${\mathcal F}$ of $r$-subsets of $[n]$ satisfying that $|\mathcal{F}^c| < {n-s \choose r-s}$. Therefore the following question arises naturally:  does Theorem \ref{r_teo2} remain true under the assumption that $|\mathcal{F}^c| < {n-s \choose r-s}$?

Secondly, we prove a $q$-analogue of Theorem \ref{r_teo1}: If the size of a family $\mathcal{F}$ of $r$-dimensional subspaces of $\mathbb{F}_{q}^n$ is close enough to ${n \brack r}$ then $\rank_{K}(W_{r,s}^{\mathcal F}(q)) = \rank_{K}( W_{r,s}(q))$ for any field $K$ of characteristic coprime to $q$. 

The condition in Theorem \ref{r_teo3} on the size of $\mathcal{F}^c$ is somewhat restrictive. For example, if we remove all the $r$-subspaces containing a particular $s$-subspace the rank over the rationals of  $W_{r,s}^{\mathcal{F}}(q)$ has to decrease at least by one. So a natural question is: does Theorem \ref{r_teo3} remain true under the assumption  $|\mathcal{F}^c| < {n-s \brack r-s}$?

\end{document}